\documentclass[a4paper, 11pt]{amsart} 
   
\raggedbottom  
\pagestyle{headings}
\setcounter{tocdepth}{1}   
\usepackage[headings]{fullpage}               
\usepackage{boxedminipage}
\usepackage{amsfonts}
\usepackage{amsmath} 
\usepackage{amssymb}
\usepackage{graphicx}
\usepackage{amsthm}
\usepackage{subfig}
\usepackage{tikz}
\usetikzlibrary{cd}
\usepackage{setspace}
\usepackage[colorlinks=true,linkcolor=blue,urlcolor=blue,citecolor=red]{hyperref}
\usepackage[nameinlink]{cleveref}
\usepackage{enumerate}
\hypersetup{linktocpage=true,}
\newtheorem{thm}{Theorem}[section]
\newtheorem{lemma}[thm]{Lemma}
\newtheorem{conjecture}[thm]{Conjecture}
\newtheorem{proposition}[thm]{Proposition}
\newtheorem{corollary}[thm]{Corollary}

\theoremstyle{definition}
\theoremstyle{definition}
\theoremstyle{definition}

\newcommand{\rank}{\operatorname{rank}}

\newcommand{\val}{\operatorname{val}}

  \newcommand{\U}{{\mathcal{U}}}

\definecolor{colR}{rgb}{.932,.172,.172}
\definecolor{colB}{rgb}{.255,.41,.884}
\definecolor{colG}{rgb}{0,0.7,0}

\tikzstyle{vertex}=[circle, draw, fill=black, inner sep=0pt, minimum size=4pt]
\tikzstyle{smallvertex}=[circle, line width=1.5pt, draw, fill=black, inner sep=0pt, minimum size=2pt]

\tikzstyle{edge}=[line width=1pt]
\tikzstyle{dedge}=[edge,dashed,gray]
\tikzstyle{redge}=[edge,colR]
\tikzstyle{bedge}=[edge,colB]
\tikzstyle{gedge}=[edge,colG]
\tikzstyle{lnode}=[circle,white,draw, fill=black,inner sep=1pt, font=\scriptsize]
\tikzstyle{hollow}=[circle,gray,draw, thick, fill=white,inner sep=0pt, minimum size=4pt]

\let\emph\relax
\DeclareTextFontCommand{\emph}{\bfseries\em}

\begin{document}

\title{Coincident-point rigidity in normed planes}

\author[Sean Dewar]{Sean Dewar}
\address{Johann Radon Institute\\ Altenberger Strasse 69\\ 4040\\ Linz\\
Austria}
\email{sean.dewar@ricam.oeaw.ac.at}
\author[John Hewetson]{John Hewetson}
\address{Dept.\ Math.\ Stats.\\ Lancaster University\\
Lancaster LA1 4YF \\U.K. }
\email{j.hewetson2@lancaster.ac.uk}
\author[Anthony Nixon]{Anthony Nixon}
\address{Dept.\ Math.\ Stats.\\ Lancaster University\\
Lancaster LA1 4YF \\U.K.}
\email{a.nixon@lancaster.ac.uk}
\thanks{2020 {\it  Mathematics Subject Classification.}
52C25, 05C10, 52B40, 46B20\\
Key words and phrases: bar-joint framework, global rigidity, non-Euclidean framework, count matroid, recursive construction, normed spaces, analytic norm}

\begin{abstract}
    A bar-joint framework $(G,p)$ is the combination of a graph $G$ and a map $p$ assigning positions, in some space, to the vertices of $G$. The framework is rigid if every edge-length-preserving continuous motion of the vertices arises from an isometry of the space. We will analyse rigidity when the space is a (non-Euclidean) normed plane and two designated vertices are mapped to the same position. This non-genericity assumption leads us to a count matroid first introduced by Jackson, Kaszanitsky and the third author. We show that independence in this matroid is equivalent to independence as a suitably regular bar-joint framework in a normed plane with two coincident points; this characterises when a regular normed plane coincident-point framework is rigid and allows us to deduce a delete-contract characterisation. We then apply this result to show that an important construction operation (generalised vertex splitting) preserves the stronger property of global rigidity in normed planes and use this to construct rich families of globally rigid graphs when the normed plane is analytic.
\end{abstract}

\maketitle

\section{Introduction}

A bar-joint \emph{framework} $(G,p)$ is the combination of a graph $G=(V,E)$ and a map $p:V\rightarrow \mathbb{R}^d$ assigning positions to the vertices of $G$ (and hence lengths to the edges). Intuitively, the framework is \emph{rigid} if every edge-length-preserving continuous motion of the vertices arises from an isometry of $\mathbb{R}^d$. More strongly, $(G,p)$ is \emph{globally rigid} if every framework in $\mathbb{R}^d$, on the same graph, with the same edge lengths actually has the same distance between every pair of vertices.

The rigidity and global rigidity of bar-joint frameworks in Euclidean spaces has been intensely studied in recent years (e.g. \cite{asi-rot,C2005,GHT,J&J,laman,NSW}) and has a rich history going as far back as classical work of Euler and Cauchy on Euclidean polyhedra. 
In the last decade, work on rigidity has been generalised to various non-Euclidean normed spaces (e.g. \cite{D19,DKN,DN,KL,kit-pow-1}). All of these results concern characterising the combinatorial nature of the `generic' behaviour of frameworks. This article extends this to frameworks with two points lying in the same location. The difficulty that already arises in this context shows how necessary the genericity assumption in those papers really was. 
Frameworks with coincident points have been considered in the Euclidean context \cite{FJK,Gul} and applied to global rigidity there \cite{CJ}, as well as for frameworks on surfaces \cite{JKN}.

Beyond the natural extension towards non-generic frameworks (and thus nearer to being of potential use in applications), we are motivated by the study of global rigidity in normed planes. The first and third author recently instigated research in this direction \cite{DN} proving global rigidity for an infinite class of graphs in analytic normed planes. In this paper we use our analysis of frameworks with two coincident points to improve
this result by creating a substantially richer class of globally rigid graphs.

We conclude the introduction with a short outline of what follows. After introducing the necessary background on the theory of rigid frameworks in normed planes, coincident point frameworks and the relevance notion of graph sparsity, in \Cref{sec:introdefs}, the majority of the paper is contained in \Cref{sec:recursive}. Here we provide a detailed geometric analysis of the effect of certain graph operations on the rigidity of a coincident point framework in a normed plane. In \Cref{sec:char} we combine these geometric results with combinatorial results of \cite{JKN} to establish a purely combinatorial characterisation of independence in the `coincident point normed plane rigidity matroid' and we deduce from this a delete-contract characterisation of coincident point rigidity in any strictly convex normed plane. In \Cref{sec:glob} we provide our other main results; these concern global rigidity. We deduce from our delete-contract characterisation that another graph operation preserves global rigidity, and we use this result alongside the results of \cite{DN} to establish global rigidity in the special case of analytic normed planes for a rich family of graphs.

\section{Rigidity and \texorpdfstring{$uv$}{uv}-coincident frameworks in normed spaces}
\label{sec:introdefs}

\subsection{Rigidity in normed spaces}

Let $X$ be a (real finite-dimensional) normed space with norm $\|\cdot\|$.
Unless stated otherwise, {\em we shall assume all normed spaces are not isometrically isomorphic to any Euclidean space}.
We define a \emph{support functional} of $z \in X$ to be a linear functional $f:X \rightarrow \mathbb{R}$ such that $f(z) = \|z\|^2$ and $\sup_{\|x\|=1} f(x) = \|z\|$.
It follows from the Hahn-Banach theorem that every point has a support functional and every linear functional of $X$ is the support functional of a point in $X$.
A non-zero point in $X$ is said to be \emph{smooth} if it has exactly one support functional,
and we shall denote the unique support functional of a smooth point $z$ by $\varphi_z$.
We say $X$ is \emph{smooth} if every non-zero point in $X$ is smooth,
and \emph{strictly convex} if every linear functional of $X$ is the support functional of at least one, and hence exactly one, point in $X$.
We note that for normed planes (2-dimensional normed spaces),
strict convexity is equivalent to the property that any two linearly independent smooth points have linearly independent support functionals.

Now let $(G,p)$ be a framework in $X$; that is the combination of a graph $G=(V,E)$ and a map $p:V\rightarrow X$ (called a \emph{placement} of $G$).
A \emph{finite flex} of $(G,p)$ is a continuous path $\alpha:[0,1] \rightarrow X^V$ where $\alpha(0) = p$ and $\|\alpha_x(t) - \alpha_y(t)\|= \|p_x-p_y\|$ for each edge $xy \in E$ and every $t \in [0,1]$.
If every framework $(G,\alpha(t))$ is congruent to $(G,p)$,
i.e.~there exists an isometry $f_t:X \rightarrow X$ so that $\alpha_x(t) = f_t(p_x)$ for every $x \in V$,
then we say $\alpha$ is \emph{trivial}.
We now define $(G,p)$ to be \emph{(continuously) rigid} if every finite flex of $(G,p)$ is trivial.

Since determining whether a framework is rigid is computationally challenging \cite{Abb},
we follow the literature and linearise the problem.
First, let $(G,p)$ be a \emph{well-positioned} framework,
i.e.~the point $p_x-p_y$ is smooth for each edge $xy\in E$.
An \emph{infinitesimal flex} of $(G,p)$ is a map $u:V\rightarrow X$ where $\varphi_{p_x-p_y}(u_x-u_y) =0$ for all $xy \in E$.
An infinitesimal flex is \emph{trivial} if there exists a linear map $T:X \rightarrow X$ and a point $z_0 \in X$ so that $u_x = T(p_x) +z_0$ for every vertex $x \in V$, and for every point $z \in X$ with support functional $f$ we have $f \circ T(z) =0$.
We now say that a well-positioned framework $(G,p)$ is \emph{infinitesimally rigid} if every infinitesimal flex of $(G,p)$ is trivial.

For a $d$-dimensional normed space $X$, a well-positioned framework $(G,p)$ in $X$, and a fixed basis $b_1,\ldots,b_d$ of $X$,
we can define the \emph{rigidity matrix} to be the $|E| \times d|V|$ matrix $R(G,p)$,
where for every $e \in E$, $x \in V$ and $i \in \{1,\ldots,d\}$ we have
\begin{align*}
    R(G,p)_{e,(x,i)} =
    \begin{cases}
        \varphi_{p_x-p_y}(b_i) &\text{if } e = xy,\\
        0 &\text{otherwise.}
    \end{cases}
\end{align*}
The choice of basis used to define $R(G,p)$ can be arbitrary as we are only interested in the sets of linearly independent rows of the matrix.
We say a well-positioned framework is \emph{independent} if $\rank R(G,p) =|E|$,
\emph{minimally (infinitesimally) rigid} if it is both independent and infinitesimally rigid,
and \emph{regular} if $\rank R(G,p) \geq \rank (G,q)$ for all other well-positioned frameworks $(G,q)$.
It is immediate that all independent and/or infinitesimally rigid frameworks are regular.
Given $k$ is the dimension of the linear space of trivial infinitesimal flexes of $(G,p)$,
it can be shown that so long as the affine span of the set $\{p_x :x \in V\}$ is $X$, the framework $(G,p)$ will be infinitesimally rigid if and only if $\rank R(G,p) = d|V| -k$; see \cite[Proposition 3.13]{D19}.
Consequently any well-positioned framework where the affine span of the set $\{p_x :x \in V\}$ is $X$, is  minimally rigid if and only if $|E| = \rank R(G, p) = d|V|-k$.

We can link infinitesimal rigidity to rigidity with the following result.
\begin{thm}\label{t:ar}
    Let $(G,p)$ be a well-positioned framework in a normed space $X$.
    \begin{enumerate}[(i)]
        \item \cite[Theorem 3.9]{dew21} If $(G,p)$ is infinitesimally rigid,
        then it is rigid.
        \item \cite[Theorem 1.1 \& Lemma 4.4]{D19} If $(G,p)$ is regular and rigid, and the set of smooth points in $X$ is open,
        then $(G,p)$ is infinitesimally rigid.
    \end{enumerate}
\end{thm}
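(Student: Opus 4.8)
The plan is to establish the two implications by the circle of ideas behind the Asimow--Roth theorem in Euclidean space, adapted to the places where the norm fails to be differentiable. Throughout, let $\rho\colon X^V\to\bR^E$ denote the edge-length map $\rho(q)_{xy}=\|q_x-q_y\|$, so that finite flexes of $(G,p)$ are exactly the paths through $p$ in the fibre $\rho^{-1}(\rho(p))$. Two facts will be used repeatedly. First, whenever $q$ is well-positioned, $\rho$ is differentiable at $q$ and its derivative is, up to a positive rescaling of each row, the rigidity matrix $R(G,q)$; thus $\ker R(G,q)$ is the space of infinitesimal flexes of $(G,q)$, and directly from the definition of a trivial infinitesimal flex (apply the condition $f\circ T(z)=0$ with $z=q_x-q_y$) this kernel always contains the space of trivial infinitesimal flexes, so $\rank R(G,q)\le d|V|-k_q$, where $k_q$ is the dimension of the latter. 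Second, by the Mazur--Ulam theorem and compactness of the group of linear isometries, $\Iso(X)$ is a Lie group acting properly on $X^V$; hence its orbits are closed embedded submanifolds, and the tangent space to the orbit through $q$ is exactly the space of trivial infinitesimal flexes, so that orbit has dimension $k_q$.

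For (i), suppose $(G,p)$ is infinitesimally rigid, so that $\ker R(G,p)$ equals the space of trivial flexes and $\rank R(G,p)=d|V|-k$, where $k=k_p$. Lower semicontinuity of $q\mapsto\rank R(G,q)$ gives $\rank R(G,q)\ge d|V|-k$ for $q$ near $p$, while upper semicontinuity of the dimension of isometry stabilisers gives $k_q\ge k$ and hence $\rank R(G,q)\le d|V|-k_q\le d|V|-k$ near $p$; so $\rho$ has constant rank $d|V|-k$ on a neighbourhood of $p$. By the constant-rank theorem, $\rho^{-1}(\rho(p))$ is a $k$-dimensional submanifold near $p$. The $\Iso(X)$-orbit of $p$ is a $k$-dimensional submanifold contained in this fibre, so near $p$ it is an open subset of it. Therefore any finite flex $\alpha$ with $\alpha(0)=p$ satisfies $(G,\alpha(t))\cong(G,p)$ for all small $t$; a standard open--closed continuation argument on $[0,1]$ (using that the orbit is closed and that congruence preserves infinitesimal rigidity) then gives this for all $t$, so $\alpha$ is trivial and $(G,p)$ is rigid.

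For (ii), suppose $(G,p)$ is regular and rigid and that the set of smooth points of $X$ is open. Openness makes well-positionedness an open condition, so there is a neighbourhood $U$ of $p$ in $X^V$ consisting of well-positioned placements, on which $\rho$ is $C^1$. Regularity gives $\rank R(G,q)\le\rank R(G,p)=:r$ on $U$, and lower semicontinuity of rank gives the reverse inequality near $p$, so $\rho$ has constant rank $r$ near $p$; by the constant-rank theorem, $\rho^{-1}(\rho(p))$ is a submanifold of dimension $d|V|-r$ near $p$. Rigidity forces every placement in this fibre close to $p$ to be congruent to $(G,p)$, hence to lie in the $\Iso(X)$-orbit of $p$; so a $(d|V|-r)$-dimensional submanifold is contained in the $k$-dimensional orbit, giving $d|V|-r\le k$. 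Since conversely $\ker R(G,p)$ contains the $k$-dimensional space of trivial flexes, $d|V|-r\ge k$. Hence $d|V|-r=k$ and $\ker R(G,p)$ equals the space of trivial flexes, i.e.\ $(G,p)$ is infinitesimally rigid.

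I expect the main obstacle in both parts to be the bookkeeping forced by the non-differentiability of $\|\cdot\|$: one must ensure that every placement appearing in the argument stays well-positioned, so that $\rho$ really does behave like a $C^1$ map with Jacobian $R(G,\cdot)$ --- this is exactly where the hypothesis that $(G,p)$ is well-positioned in (i), and that the set of smooth points of $X$ is open in (ii), enter, and the constant-rank machinery of (ii) has no content without the latter. A secondary technical point is the appeal to Mazur--Ulam and the compactness of the linear isometry group to secure the Lie-group and proper-action structure of $\Iso(X)$, which is what turns ``rigid'' into ``the fibre lies in the orbit'' and supplies the closedness needed for the continuation argument in (i).
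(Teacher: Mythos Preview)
The paper does not prove this theorem; it is quoted from \cite{dew21} and \cite{D19} and used as a black box. So there is no ``paper's proof'' to compare against, and your sketch should be judged on its own merits.

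Your argument for part (ii) is essentially the right one. Once the set of smooth points is open, well-positionedness is an open condition, the map $\rho$ is $C^1$ on a neighbourhood of $p$ (continuity of $x\mapsto\varphi_x$ on that open set is the content of \cite[Theorem~25.5]{rockafellar}), regularity pins the rank, and the constant-rank theorem together with the orbit-dimension count finishes the job.

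Part (i), however, has a genuine gap. Your proof invokes lower semicontinuity of $q\mapsto\rank R(G,q)$ on a neighbourhood of $p$ and then the constant-rank theorem for $\rho$ --- but both of these require that $R(G,q)$ is defined, i.e.\ that $q$ is well-positioned, for \emph{all} $q$ in an open neighbourhood of $p$, and that $\rho$ is $C^1$ there. That is exactly the hypothesis ``the set of smooth points of $X$ is open'', which is assumed in (ii) but \emph{not} in (i). Knowing only that $(G,p)$ itself is well-positioned does not give you an open set of well-positioned placements, and without it the constant-rank theorem has no traction. Your closing paragraph suggests you believe well-positionedness of the single framework $(G,p)$ is what makes (i) go through; it is not. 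The asymmetry in the hypotheses of (i) and (ii) is real, and the proof of (i) in \cite{dew21} must use a genuinely different mechanism (one that does not pass through a local $C^1$ normal form for $\rho$) to cope with norms whose smooth locus is merely conull rather than open.

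A secondary point: you assert that the tangent space to the $\Iso(X)$-orbit at $p$ coincides with the space of trivial infinitesimal flexes as defined here (via linear maps $T$ with $f\circ T(z)=0$ for every $z$ and support functional $f$). One inclusion is the easy differentiation you indicate; the reverse --- that every such $T$ lies in the Lie algebra of the linear isometry group --- is not automatic and is itself one of the structural results proved in \cite{D19}. In the non-Euclidean planar setting both spaces reduce to translations, but as stated your argument needs this identification and it should not be taken for granted.
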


We shall make use of the following perturbation result throughout the paper. It will be convenient to refer to properties of placements rather than frameworks. To this end we say that a placement $p$ of $G$ has property $P$ if the framework $(G,p)$ has property $P$.

\begin{lemma}[{\cite[Lemmas 4.1 and 4.4]{D19}}]\label{lem:perturb}
    For any graph $G$ and any normed space $X$,
    the set of well-positioned placements of $G$ in $X$ is a conull (i.e.~the complement of a set with Lebesgue measure zero) subset of $X^V$,
    and the set of regular placements of $G$ in $X$ is a non-empty open subset of the set of well-positioned placements.
\end{lemma}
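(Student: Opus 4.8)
The plan is to establish the two assertions separately: first that failing to be well-positioned is a measure-zero phenomenon, and then that the rigidity matrix depends continuously on the placement over the well-positioned locus, which at once yields that the regular placements form a non-empty open subset.

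For the first assertion I would use that, since $X$ is finite-dimensional, the norm $\|\cdot\|$ is a finite convex function on $X$; being locally Lipschitz it is differentiable outside a set $N\subseteq X$ of Lebesgue measure zero (Rademacher's theorem, or the classical a.e.-differentiability of convex functions). At a non-zero point $z$, differentiability of $\|\cdot\|$ at $z$ is exactly the statement that $z$ is smooth, so $N$ contains every non-smooth non-zero point. For each edge $e=xy\in E$ the map $\pi_e\colon X^V\to X$, $p\mapsto p_x-p_y$, is linear and surjective; writing $X^V=\ker\pi_e\oplus U$ with $\pi_e|_U$ an isomorphism, Fubini's theorem shows $\pi_e^{-1}(N)$ is null in $X^V$. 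The placements that are not well-positioned form the finite union $\bigcup_{e\in E}\pi_e^{-1}(N)$, hence a null set, so the well-positioned placements are conull.

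The heart of the second assertion is the continuity of the support-functional map $z\mapsto\varphi_z$ on the set $S$ of smooth points. I would note that $\varphi_z$ is the derivative at $z$ of the convex function $f:=\tfrac12\|\cdot\|^2$: positive $2$-homogeneity and Euler's identity for homogeneous functions give $\varphi_z(z)=\|z\|^2$, and the chain rule gives $\|\varphi_z\|_\ast=\|z\|$, so $\varphi_z$ is indeed a support functional of $z$; conversely, using the Fenchel conjugate $f^\ast=\tfrac12\|\cdot\|_\ast^2$ together with the elementary bound $\varphi(z)\le\|\varphi\|_\ast\|z\|\le\tfrac12\|z\|^2+\tfrac12\|\varphi\|_\ast^2$, one checks that the subdifferential $\partial f(z)$ is exactly the set of support functionals of $z$. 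Since the subdifferential of a finite convex function on $\mathbb{R}^d$ is locally bounded with closed graph, and is single-valued precisely at the points of differentiability, a standard argument (extract a convergent subsequence from a bounded image sequence and identify its limit via the closed graph) shows $\varphi|_S$ is continuous. Hence, for a fixed basis $b_1,\dots,b_d$, every entry $R(G,p)_{e,(x,i)}=\varphi_{p_x-p_y}(b_i)$ is continuous in $p$ over the set $W$ of well-positioned placements, and so is $p\mapsto R(G,p)$.

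It remains to deduce the topological statement. Put $r:=\max\{\rank R(G,q):q\in W\}$, attained since $\rank$ takes values in $\{0,\dots,\min(|E|,d|V|)\}$; the regular placements are precisely those $p\in W$ with $\rank R(G,p)=r$, so this set is non-empty. For openness, take a regular $p$, pick an $r\times r$ submatrix of $R(G,p)$ with non-zero determinant, and observe that by the continuity just established the corresponding minor of $R(G,q)$ stays non-zero for $q$ in a neighbourhood of $p$ in $W$; there $\rank R(G,q)\ge r$, and maximality forces equality. The one genuinely delicate point is the convex-analytic continuity of $z\mapsto\varphi_z$ on $S$: since $S$ need not be open in $X$, this must be phrased as continuity of the restriction $\varphi|_S$, not of a map on an open domain. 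The remaining ingredients — Fubini's theorem and the lower semicontinuity of matrix rank — are routine.
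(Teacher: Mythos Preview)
The paper does not supply its own proof of this lemma: it is imported verbatim from \cite[Lemmas~4.1 and~4.4]{D19}, so there is no in-paper argument to compare against. Your proof is correct and self-contained, and it matches the toolkit the paper itself relies on elsewhere --- in particular, the continuity of $z\mapsto\varphi_z$ on smooth points that you derive via the closed-graph/local-boundedness property of subdifferentials is precisely \cite[Theorem~25.5]{rockafellar}, which the paper invokes directly in the proof of \Cref{lem:extension}. One small point worth making explicit: your null set $N$ also contains the origin (the norm is never differentiable at $0$ when $\dim X\ge 1$), so the case $p_x=p_y$ is covered by your Fubini step even though smoothness is only defined for non-zero points.
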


We say that a graph is \emph{rigid} (respectively, \emph{independent}, \emph{minimally rigid}) if it has an infinitesimally rigid (respectively, independent, minimally rigid) placement.

Whether a graph $G = (V, E)$ is rigid/independent in a normed plane can be determined by simple sparsity counting conditions.
For $\emptyset \neq U \subseteq V$, $i_G(U)$ will denote the number of edges in the subgraph, $G[U]$, of $G$ induced by $U$.
For non-negative integers $k,\ell$, 
we say $G$ is \emph{$(k,\ell)$-sparse} if $i_G(U) \leq k|U| - \ell$ for every $\emptyset \neq U \subseteq V$ with $|U| \geq k$;
if $G$ is $(k,\ell)$-sparse and $|E|=k|V|-\ell$, then we say $G$ is \emph{$(k,\ell)$-tight}. 

Note that technically $(k,\ell)$-sparse graphs may have parallel edges and loops. However it is clear that independent graphs cannot, so we will assume throughout that all $(k,\ell)$-sparse graphs are simple.

\begin{thm}[\cite{dew2}]\label{thm:rigidne}
    A graph $G$ is minimally rigid in a normed plane $X$ if and only if $G$ is $(2,2)$-tight.
\end{thm}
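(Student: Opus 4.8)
The plan is to prove both implications by analysing the rank of the rigidity matrix, using the fact --- standard in this setting --- that a non-Euclidean normed plane $X$ admits no non-trivial linear map $T$ with $f\circ T(z)=0$ for every $z$ with support functional $f$: such a $T$ would generate a one-parameter group of linear isometries, and averaging an inner product over a maximal torus of its (compact) closure would exhibit $X$ as Euclidean. Hence the trivial infinitesimal flexes of any well-positioned framework in $X$ whose joints affinely span $X$ are exactly the translations, a $2$-dimensional space, and so by the remarks preceding the statement such a framework is minimally rigid if and only if $|E|=\rank R(G,p)=2|V|-2$.

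\emph{Necessity.} Suppose $G$ has a minimally rigid placement $p$. For $\emptyset\neq U\subseteq V$, the rows of $R(G,p)$ indexed by $E(G[U])$ vanish outside the columns of $U$, where they form $R(G[U],p|_U)$; being part of an independent set of rows they are independent, so $i_G(U)=\rank R(G[U],p|_U)\le 2|U|-2$ since translations lie in the kernel of every rigidity matrix in $X$. Thus $G$ is $(2,2)$-sparse. When $|V|\ge 2$ a minimally rigid placement cannot be collinear (a collinear placement of a connected graph has flex space of dimension $\ge 3$), so $p$ affinely spans $X$ and infinitesimal rigidity forces $|E|=2|V|-2$; hence $G$ is $(2,2)$-tight.

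\emph{Sufficiency.} I would invoke a recursive construction of $(2,2)$-tight graphs --- every such graph arises from $K_4$ by $0$-extensions (add a vertex joined to two existing vertices) and $1$-extensions (delete an edge $ab$, add a vertex joined to $a$, $b$ and some $c$), a further operation such as a generalised vertex split being handled in the same way if needed --- and then show: (i) $K_4$ has a well-positioned placement that affinely spans $X$ with rank $6$; (ii) each extension sends a graph with such a placement to a graph with such a placement. Part (i) follows from \Cref{lem:perturb} plus a direct check that some (equivalently, by maximality of the rank, any regular) placement of $K_4$ is independent. For a $0$-extension adding $v\sim a,b$ to $(H,p)$: choose $p_v$ so that $p_v-p_a$ and $p_v-p_b$ are smooth with linearly independent support functionals (automatic off the line through $p_a,p_b$ when $X$ is strictly convex); the two new rows lie in, and are independent within, the two new columns of $v$, hence are independent of the old rows, and all conditions persist.

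\emph{The main obstacle: the $1$-extension.} Write $G=(H-ab)+v$ with $v\sim a,b,c$ and fix a regular non-collinear placement $p$ of $H$. Then $G-vc$ is a $0$-extension of the independent graph $H-ab$, so by (ii) for $0$-extensions it is independent at $p'=(p,p_v)$ for every $p_v$ with $p_v-p_a,p_v-p_b$ smooth and linearly independent support functionals; then $\rank R(G-vc,p')=2|V(G)|-3$ and the flex space of $(G-vc,p')$ is $3$-dimensional. Since $H-ab\subseteq H$ is independent, restriction to $V(H)$ is an isomorphism onto the flex space of $(H-ab,p)$ (injectivity uses independence of $\varphi_{p_v-p_a},\varphi_{p_v-p_b}$), so the non-trivial flex of $(G-vc,p')$ restricting to a fixed non-trivial flex $u_0$ of $(H-ab,p)$ has $\varphi_{p_a-p_b}\big((u_0)_a-(u_0)_b\big)\neq 0$ --- otherwise $u_0$ would be a non-trivial flex of the minimally rigid $H$ --- and in particular $(u_0)_a\neq(u_0)_b$. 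It suffices to find one admissible $p_v$ for which the associated value $u_v=u_v(p_v)$ (the unique solution of $\varphi_{p_v-p_a}(u_v-(u_0)_a)=0=\varphi_{p_v-p_b}(u_v-(u_0)_b)$) satisfies $\varphi_{p_v-p_c}(u_v-(u_0)_c)\neq 0$: then the $vc$-row of $R(G,p')$ is not orthogonal to this flex, hence not in the row space of $R(G-vc,p')$, so $\rank R(G,p')=2|V(G)|-2$ and we are done. To exhibit such a $p_v$ I would let $p_v$ tend to a point $p_0$ on the line through $p_a,p_b$ (distinct from both) for which $p_0-p_a,p_0-p_b,p_0-p_c$ are smooth and $\varphi_{p_0-p_c}$ is not proportional to $\varphi_{p_b-p_a}$ --- such $p_0$ exist. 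Along this approach the two defining equations for $u_v$ degenerate, after rescaling, to the inconsistent pair $\varphi_{p_b-p_a}(u_v)=\varphi_{p_b-p_a}((u_0)_a)$ and $\varphi_{p_b-p_a}(u_v)=\varphi_{p_b-p_a}((u_0)_b)$, so $\|u_v(p_v)\|\to\infty$ with $u_v(p_v)/\|u_v(p_v)\|$ converging into $\ker\varphi_{p_b-p_a}$; as $\ker\varphi_{p_b-p_a}\cap\ker\varphi_{p_0-p_c}=\{0\}$, the scalar $\varphi_{p_v-p_c}(u_v(p_v)-(u_0)_c)$ is unbounded, hence non-zero for $p_v$ near $p_0$. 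The real work is concentrated in this limiting step --- making precise the blow-up of $u_v$ and its limiting direction as two support functionals coalesce --- together with, when $X$ is not strictly convex, the routine but fiddly selection of all the positions above so as to avoid the flat faces of the unit sphere; everything else is bookkeeping with \Cref{lem:perturb}.
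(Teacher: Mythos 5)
This theorem is not proved in the paper at all --- it is quoted from \cite{dew2} --- so the only question is whether your argument would stand on its own. The necessity direction and the preliminary observation that a non-Euclidean normed plane has only translations as trivial flexes are fine in outline. The decisive gap is in the induction scheme for sufficiency: it is simply not true that every $(2,2)$-tight graph arises from $K_4$ by $0$- and $1$-extensions. The graph consisting of two copies of $K_4$ glued at a vertex (the paper's \Cref{fig:2k4s}) is $(2,2)$-tight, but every vertex has degree $3$ or $6$, and the neighbours of each degree-$3$ vertex already form a triangle, so no inverse $0$- or $1$-extension is available within simple graphs. The correct recursive characterisation, cited in this very paper (\cite{nix-owe-pow-2}, Theorem 3.1), starts from a single vertex and additionally needs vertex-to-$4$-cycle and vertex-to-$K_4$ moves. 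Your parenthetical remark that ``a further operation such as a generalised vertex split'' could be ``handled in the same way if needed'' hides genuinely new work: the vertex-to-$4$-cycle (vertex-splitting) move is not an instance of, nor analogous to, a $1$-extension; it requires its own rank argument (compare the row-operation proof of \Cref{l:fourcycleind}, which even in the coincident setting the present authors can only carry out under strict convexity), and the vertex-to-$K_4$ move requires a gluing/substitution argument in the spirit of \Cref{lem:extension}. These operations are precisely where a large part of the difficulty of \cite{dew2} lies, so the proposal as written does not close the sufficiency direction.

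A secondary concern: the theorem is stated for an arbitrary (non-Euclidean) normed plane, and you defer the non-strictly-convex, non-smooth case to a ``routine but fiddly'' selection of positions. That is optimistic. Your $0$-extension step relies on finding $p_v$ with $\varphi_{p_v-p_a},\varphi_{p_v-p_b}$ linearly independent, and your $1$-extension step requires smooth directions with controlled support functionals along a degenerating family approaching the line through $p_a,p_b$; in a plane whose unit sphere has flat faces (e.g.\ $\ell_\infty^2$) whole cones of directions share one functional, and the continuity of $x\mapsto\varphi_x$ is only available on the (not necessarily open) set of smooth points. Managing this is a substantive part of the argument, not bookkeeping --- indeed, the present paper's inability to remove strict convexity from \Cref{l:zeroext,l:fourcycleind} is exactly why its \Cref{thm:uvmatroid} is stated for strictly convex planes and the general case is left as a conjecture. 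The blow-up analysis you sketch for the $1$-extension is a reasonable core idea, but as presented it, too, is a sketch of the hard step rather than a proof.
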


For a family
\(\mathcal{S}=\{S_{1},S_{2},\dots ,S_{k}\}\) of subsets \(S_{i}\subseteq
V\), $1\leq i \leq k$, 
we say that ${\mathcal{S}}$ is a \emph{cover} of $F\subseteq E$ if $F\subseteq \{ xy :  \{x,y\}\subseteq S_i\
\hbox{for some}\ 1\leq i\leq k \}$. We can combine \Cref{thm:rigidne} with \cite[Section 3.1]{JKN} to obtain the following result.

\begin{corollary}\label{c:rankformula}
    Let $(G,p)$ be a well-positioned framework in a normed plane $X$.
    Let $\mathcal{S}$ be the set of all covers $\mathcal{X} := \{X_1,\ldots,X_k\}$.
    Given $s: \mathbb{N} \rightarrow \{0,1\}$ is the map with $s(x) = 1$ if $x=2$ and $s(x)=0$ otherwise,
    we have
    \begin{align*}
        \rank R(G,p) \leq \min_{\mathcal{X} \in \mathcal{S}} \sum_{i=1}^k \left( 2|X_i| - 2 - s(|X_i|) \right),
    \end{align*}
    with equality if and only if $(G,p)$ is regular. Moreover it suffices to minimise over all  covers $\mathcal{Y} := \{Y_1,\ldots,Y_k\}$ of the edge set $E$ where $|Y_i| \geq 2$ for each $i$ and $|Y_i \cap Y_j| \leq 1$ for all $i \neq j$,
    with equality only if $\min \{|Y_i|,|Y_j|\} = 2$.
\end{corollary}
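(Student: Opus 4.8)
The plan is to prove the inequality by a direct decomposition of the rigidity matrix, to obtain the equality case from \Cref{thm:rigidne} together with the rank formula for the $(2,2)$-count matroid in \cite[Section 3.1]{JKN}, and to derive the ``moreover'' by an uncrossing argument.

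First I would establish the upper bound for every well-positioned $p$. Fix a cover $\mathcal{X}=\{X_1,\dots,X_k\}$ (we assume each $X_i$ has at least two vertices, which is no loss since smaller sets cover no edge). Assign each edge $e\in E$ to one member $X_i$ with $e\subseteq X_i$ and let $E_i$ denote the set of edges assigned to $X_i$, so that $E=\bigsqcup_i E_i$ and $E_i\subseteq E(G[X_i])$. As the row of $R(G,p)$ indexed by an edge $xy$ is supported on the columns indexed by $x$ and $y$ (using $\varphi_{-z}=-\varphi_z$), the submatrix of $R(G,p)$ on the rows $E_i$ is supported on the columns indexed by $X_i$, hence has rank at most $\rank R(G[X_i],p|_{X_i})$; subadditivity of rank over row blocks then gives $\rank R(G,p)\le\sum_i\rank R(G[X_i],p|_{X_i})$. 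Finally, since the constant maps always lie in the kernel of a rigidity matrix we have $\rank R(G[X_i],p|_{X_i})\le 2|X_i|-2$, and if $|X_i|=2$ then (loops being excluded by well-positionedness and parallel edges giving identical rows) this rank is at most $1$; in both cases $\rank R(G[X_i],p|_{X_i})\le 2|X_i|-2-s(|X_i|)$. Summing and minimising over $\mathcal{S}$ yields the displayed inequality for every well-positioned $p$.

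For the equality case I would use that, by \Cref{lem:perturb} and standard arguments, the map $E'\mapsto\rank(R(G,q)|_{E'})$ for a regular placement $q$ is the rank function of a matroid $\mathcal{R}_X$ on $E$, that $(G,p)$ is regular exactly when $\rank R(G,p)$ attains the rank of $\mathcal{R}_X$, and that \Cref{thm:rigidne} identifies $\mathcal{R}_X$ with the $(2,2)$-count matroid $\mathcal{M}_{2,2}$ (its bases on a complete graph being the spanning $(2,2)$-tight subgraphs). By \cite[Section 3.1]{JKN} the rank of $G$ in $\mathcal{M}_{2,2}$ equals $\min_{\mathcal{X}\in\mathcal{S}}\sum_i(2|X_i|-2-s(|X_i|))$, so equality holds whenever $p$ is regular. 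If $p$ is well-positioned but not regular, then $\rank R(G,p)$ is strictly less than the value at a regular placement, hence strictly less than the minimum, which gives the ``if and only if''.

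For the ``moreover'' I would start from a cover attaining the minimum, discard every member of size at most $1$, and then repeatedly replace two distinct members $Y_i,Y_j$ by $Y_i\cup Y_j$ whenever $|Y_i\cap Y_j|\ge 2$, or whenever $|Y_i\cap Y_j|=1$ and $\min\{|Y_i|,|Y_j|\}\ge 3$. A short calculation using $|Y_i\cup Y_j|=|Y_i|+|Y_j|-|Y_i\cap Y_j|$ and $0\le s\le 1$ shows no such replacement increases $\sum_i(2|Y_i|-2-s(|Y_i|))$, while $\sum_i|Y_i|$ strictly decreases at each step, so the process terminates at a cover with every part of size at least $2$, with all pairwise intersections of size at most $1$, and with any intersection of size exactly $1$ occurring only between a part and a $2$-element part; this cover still attains the minimum. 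The step I expect to be the main obstacle is the matroid identification in the equality case: checking carefully that \Cref{thm:rigidne} determines $\mathcal{R}_X$ as a matroid (and not merely its maximal independent sets in one graph) and that the rank formula of $\mathcal{M}_{2,2}$ in \cite[Section 3.1]{JKN} carries exactly the correction term $s$. The matrix decomposition and the uncrossing are routine.
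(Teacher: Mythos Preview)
Your proposal is correct and follows exactly the approach the paper indicates: the corollary is stated without proof beyond the remark that it follows by combining \Cref{thm:rigidne} with the count-matroid rank formula in \cite[Section 3.1]{JKN}, and your argument (direct row-block upper bound, matroid identification via \Cref{thm:rigidne}, uncrossing for the ``moreover'') is precisely how one unpacks that combination. The concern you flag about \Cref{thm:rigidne} determining the full matroid rather than only the bases is not a real obstacle, since independence passes to subgraphs and every $(2,2)$-sparse graph embeds in a $(2,2)$-tight one.
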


\subsection{\texorpdfstring{$uv$}{uv}-coincident rigidity and \texorpdfstring{$uv$}{uv}-sparse graphs}

Let $G=(V,E)$  be a graph with vertices $u,v\in V$, and let $X$ be a normed space.
A framework $(G,p)$ in $X$ is \emph{$uv$-coincident} if $p_u=p_v$;
if the framework $(G-uv,p)$ is well-positioned, then we say that $(G,p)$ is a \emph{well-positioned} $uv$-coincident framework. Since $p_u=p_v$, we consider $G-uv$ so as to maintain smoothness of the support functionals associated with the framework; otherwise, no $uv$-coincident framework with $uv$ as an edge would be well-positioned.

A well-positioned $uv$-coincident framework $(G,p)$ is \emph{infinitesimally rigid} if $(G-uv,p)$ is infinitesimally rigid in $X$.
Given the linear space $X^V/uv := \{ q \in X^V : q_u = q_v\}$,
we say that a well-positioned $uv$-coincident framework $(G,p)$ is \emph{regular} if $\rank R(G-uv,p) \geq \rank R(G-uv,q)$ for all $q \in X^V/uv$,
and \emph{independent} if $uv \notin E$ and $(G,p)$ is independent in $X$.
A well-positioned $uv$-coincident framework $(G,p)$ is \emph{minimally (infinitesimally) rigid} if it is both infinitesimally rigid and independent.
We say a graph $G$ is \emph{$uv$-rigid} (respectively, \emph{$uv$-independent}, \emph{minimally $uv$-rigid}) if there exists a $uv$-coincident framework $(G,p)$ that is infinitesimally rigid (respectively, independent, minimally rigid).

By applying the same methods used to prove \Cref{lem:perturb},
we can obtain the natural analogue for $uv$-coincident frameworks.

\begin{lemma}\label{lem:uv-perturb}
    For any graph $G$ and any normed space $X$,
    the set of well-positioned $uv$-coincident placements of $G$ in $X$ is a conull (i.e.~the complement of a set with Lebesgue measure zero) subset of $X^V/uv$,
    and the set of regular $uv$-coincident placements of $G$ in $X$ is a non-empty open subset of the set of well-positioned $uv$-coincident placements.
\end{lemma}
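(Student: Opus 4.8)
The plan is to repeat, essentially verbatim, the arguments of \cite[Lemmas 4.1 and 4.4]{D19} that prove \Cref{lem:perturb}, the only change being that we work inside the linear subspace $X^V/uv\subseteq X^V$ and with $G-uv$ in place of $G$.

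\textbf{Conullity of the well-positioned placements.} Since the norm $\|\cdot\|$ on the finite-dimensional space $X$ is convex it is locally Lipschitz, hence by Rademacher's theorem differentiable off a Lebesgue-null set; as a non-zero point of $X$ is smooth exactly when the norm is differentiable there, the set $N\subseteq X$ of non-smooth points is null. For each edge $xy\in E(G-uv)$ let $\pi_{xy}\colon X^V/uv\to X$ be the linear map $q\mapsto q_x-q_y$. Because $xy\neq uv$, the set $\{x,y\}$ is not $\{u,v\}$, so at most one of $x,y$ lies in $\{u,v\}$, and in each of the resulting cases one checks directly that $\pi_{xy}$ is surjective. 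Since the preimage of a null set under a surjective linear map of finite-dimensional spaces is null (split off the kernel and apply Fubini), each $\pi_{xy}^{-1}(N)$ is null; and a placement $q\in X^V/uv$ fails to be well-positioned precisely when $q\in\bigcup_{xy\in E(G-uv)}\pi_{xy}^{-1}(N)$, a finite union of null sets. Hence the well-positioned $uv$-coincident placements are conull in $X^V/uv$.

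\textbf{The regular placements form a non-empty open set.} First I would record that $z\mapsto\varphi_z$ is continuous on the set of smooth non-zero points of $X$: if smooth non-zero points $z_n\to z$ with $z$ smooth, then $\|\varphi_{z_n}\|_\ast=\|z_n\|\to\|z\|$, so $(\varphi_{z_n})$ is bounded, and any subsequential limit $f$ satisfies $f(z)=\|z\|^2$ and $\|f\|_\ast\le\|z\|$, so is a support functional of $z$ and therefore equals $\varphi_z$ by smoothness; thus the whole sequence converges to $\varphi_z$. Consequently, on the set $W$ of well-positioned $uv$-coincident placements (with its subspace topology), every entry $\varphi_{q_x-q_y}(b_i)$ of $R(G-uv,q)$ depends continuously on $q$, so $q\mapsto\rank R(G-uv,q)$ is lower semicontinuous on $W$. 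This rank is an integer bounded above by $|E(G-uv)|$, hence attains its maximum value $r$ at some $q^{\ast}\in X^V/uv$, which by definition is a regular $uv$-coincident placement; and $\{q\in W:\rank R(G-uv,q)=r\}=\{q\in W:\rank R(G-uv,q)\ge r\}$ is open in $W$ by lower semicontinuity.

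I do not anticipate a genuine obstacle: the content is just transporting the measure-theoretic and semicontinuity steps of \cite{D19} to the subspace $X^V/uv$. The only point that genuinely uses the coincident setup, rather than being a word-for-word copy, is the surjectivity of the maps $\pi_{xy}$ — and this is precisely why the edge $uv$ must be deleted before speaking of well-positionedness, since otherwise $\pi_{uv}\equiv 0$ and $W$ would be empty.
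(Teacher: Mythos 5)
Your proposal is correct and follows essentially the same route as the paper, which simply invokes the methods of \cite[Lemmas 4.1 and 4.4]{D19} transported to the subspace $X^V/uv$ with $G-uv$ in place of $G$; your write-up just makes explicit the two points that need checking (surjectivity of the maps $q\mapsto q_x-q_y$ on $X^V/uv$ for edges $xy\neq uv$, and the semicontinuity-of-rank argument on the well-positioned set), and these are exactly the right adaptations.
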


As will be shown in \Cref{sec:char},
$uv$-rigidity in normed planes is closely related to the following sparsity property of graphs.

Let \(G=(V,E)\) be a graph and let $u,v$ be  two distinct vertices
of $G$.  Let $\mathcal{X}=\{X_1,X_2,...,X_k\}$ be a family with
$X_i\subseteq V$, $1\leq i\leq k$. We say that $\mathcal{X}$ is
\emph{\(uv\)-compatible} if \(u,v\in X_{i}\) and \(|X_{i}|\geq 3\)
hold for all $1\leq i\leq k$.  We define the \emph{value} of non-empty subsets
of $V$ and of \(uv\)-compatible families, denoted $\val(\cdot)$, as follows. For $\emptyset \neq U\subseteq V$, 
we let
$$\val(U)=2|U|-t_U,$$ where $t_U=4$ if $U= \{u,v\}$, $t_U=3$ if $U\neq \{u,v\}$ and $|U| \in \{2,3\}$, and $t_U=2$ otherwise.
For a
\(uv\)-compatible family \(\mathcal{X}=\{X_{1},X_{2},\dots
,X_{k}\}\) we let
$$\val(\mathcal{X})=\left(\sum_{i=1}^{k}\val(X_{i})\right)-2(k-1)=2 + \sum_{i=1}^{k}(2|X_i|-t_{X_i}-2).$$ Note that if
\(\mathcal{X}=\{U\}\) is a \(uv\)-compatible family containing only
one set then the two definitions agree, i.e.
$\val(\mathcal{X})=\val(U)$ holds.

We  say that $G$ is \emph{$uv$-sparse} if for all $U\subseteq V$ with
$|U|\geq 2$ we have $i_{G}(U)\leq \val(U)$ and for all
\(uv\)-compatible families $\mathcal{X}$ we have
$i_{G}(\mathcal{X}) := \left| \bigcup_{i=1}^k E(G[X_i]) \right| \leq \val(\mathcal{X})$. 
A graph $G$ is \emph{$uv$-tight} if it is $uv$-sparse and $|E|=2|V|-2$.
Note that if $G$ is
$uv$-sparse then $uv\notin E$. 
It was shown in \cite{JKN} that the edge sets of the $uv$-sparse subgraphs
of $G$ form the independent sets of a matroid, and when $|V|\geq 5$ this matroid has rank $2|V|-2$.

It is straightforward to construct $uv$-sparse graphs which are not $(2,2)$-sparse. Perhaps the simplest way is to notice that the complete bipartite graph $K_{2,3}$, with the part of size two comprising of $u$ and $v$, is clearly $(2,2)$-sparse but fails to be $uv$-sparse. To see this let $v_1,v_2,v_3$ be the vertices in the part of size three and consider the $uv$-compatible family $\mathcal{X}=\{X_1,X_2,X_3\}$ where $X_1=\{u,v,v_1\}$, $X_2=\{u,v,v_2\}$ and $X_3=\{u,v,v_3\}$. Then $i_G(\mathcal{X})=2+2+2=6$ and $\val(\mathcal{X})=(2\cdot 3-3)+(2\cdot 3-3)+(2\cdot 3-3)-2(3-1)=5$.

\section{Recursive operations}\label{sec:recursive}
 
Let $G = (V, E)$ be a graph. The \emph{$0$-extension} operation (on a pair of distinct vertices $a,b\in V$) adds a new vertex $z$ and two edges $za,zb$ to $G$. 
The \emph{1-extension} operation (on edge $ab\in E$ and vertex $c\in V\setminus \{a,b\}$)
deletes the edge $ab$, adds a new vertex $z$ and edges $za,zb,zc$.
The \emph{vertex-to-$H$ move} deletes a vertex $w$ and adds a copy of a $(2,2)$-tight graph $H$ with $V(H)\cap V=\{w\}$, along with an arbitrary replacement of each edge $xw$ by an edge of the form $xy$ with $y\in V(H)$.
A \emph{vertex-to-4-cycle move} takes a vertex $w$ with neighbours $v_1,v_2,\dots,v_k$ for any $k\geq 2$, splits $w$ into two new vertices $w,w'$ with $w'\notin V$,
adds edges $wv_1,w'v_1,wv_2,w'v_2$ and then arbitrarily replaces edges $xw$ with edges of the form $xy$ where $x\in \{v_3,\dots,v_k\}$ and $y\in \{w,w'\}$.
All $(2,2)$-tight graphs can be constructed from a single vertex by a sequence of 0- and 1-extensions, vertex-to-4-cycle and vertex-to-$K_4$ operations;
see \cite[Theorem 3.1]{nix-owe-pow-2} for more details. The operations we use are illustrated in \Cref{fig:01,fig:vk4}.

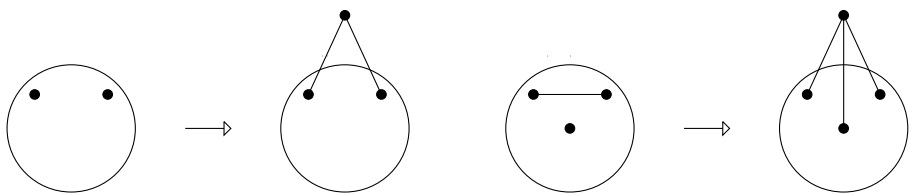
\begin{figure}[ht]
\begin{center}
\begin{tikzpicture}[scale=0.6]

\draw (-6,0) circle (40pt);

\filldraw (-6.8,.75) circle (3pt);
\filldraw (-5.2,.75) circle (3pt);

\draw[black]
(-2.65,0) -- (-3.5,0);

\draw[black]
(-2.65,0.15) -- (-2.65,-.15) -- (-2.5,0) -- (-2.65,.15);

\draw (0,0) circle (40pt);

\filldraw (-.8,.75) circle (3pt);
\filldraw (0,2.5) circle (3pt);
\filldraw (0.8,.75) circle (3pt);

\filldraw (-.5,1.6) circle (0pt) node[anchor=east]{};
\filldraw (-.4,1.6) circle (0pt) node[anchor=west]{};

\draw[black]
(.8,.75) -- (0,2.5) -- (-.8,.75);

\end{tikzpicture}
\hspace{1cm}
\begin{tikzpicture}[scale=0.6]

\draw (0,0) circle (40pt);

\filldraw (-.8,.75) circle (3pt);
\filldraw (0,0) circle (3pt);
\filldraw (0.8,.75) circle (3pt);

\filldraw (-.5,1.6) circle (0pt) node[anchor=east]{};
\filldraw (0,1.6) circle (0pt) node[anchor=west]{};

\draw
(-.8,.75) -- (0.8,.75);

\draw[black]
(2.5,0) -- (3.35,0);

\draw[black]
(3.35,0.15) -- (3.35,-.15) -- (3.5,0) -- (3.35,.15);

\draw (6,0) circle (40pt);

\filldraw (5.2,.75) circle (3pt);
\filldraw (6,2.5) circle (3pt);
\filldraw (6.8,.75) circle (3pt);
\filldraw (6,.0) circle (3pt);

\draw
(5.2,.75) -- (6,2.5) -- (6.8,.75);

\draw
(6,2.5) -- (6,0);

\end{tikzpicture}
\end{center}
\caption{0-extension and 1-extension.}
\label{fig:01}
\end{figure}

\begin{center}
\begin{figure}[ht]
\centering
\begin{tikzpicture}[scale=0.9]
\draw (0,0) circle (27pt);
\draw (5,0) circle (27pt);
 
\filldraw (0,1.5) circle (2pt);
\filldraw (.2,0.5) circle (2pt);
\filldraw (-.2,0.5) circle (2pt);
\filldraw (0.6,0.35) circle (2pt);
\filldraw (-0.6,0.35) circle (2pt);

\filldraw (4.65,1.35) circle (2pt);
\filldraw (5.35,1.35) circle (2pt);
\filldraw (4.65,1.8) circle (2pt);
\filldraw (5.35,1.8) circle (2pt);

\filldraw (5.2,0.5) circle (2pt);
\filldraw (4.8,0.5) circle (2pt);
\filldraw (5.6,0.35) circle (2pt);
\filldraw (4.4,0.35) circle (2pt);

\draw[black]
(-.6,0.35) -- (0,1.5)  -- (-.2,.5) -- (0,1.5) -- (.2,.5);
\draw[black]
(.6,.35) -- (0,1.5);

\draw[black]
(2,0) -- (2.85,0);

\draw[black]
(2.85,0.15) -- (2.85,-.15) -- (3,0) -- (2.85,.15); 

\draw[black]
(4.4,0.35) -- (4.65,1.35) -- (5.35,1.35) -- (4.65,1.8) -- (5.35,1.8) -- (5.35,1.35) -- (5.6,.35);

\draw[black]
(5.2,0.5) -- (4.65,1.8) -- (4.65,1.35) -- (4.8,.5);

\draw[black]
(4.65,1.35) -- (5.35,1.8);

\end{tikzpicture}
\hspace{1cm}
\begin{tikzpicture}[scale=0.9]
\draw (0,0) circle (27pt);
\draw (5,0) circle (27pt);

\filldraw (-.6,-.1) circle (2pt);
\filldraw (0,.4) circle (2pt);
\filldraw (.6,.1) circle (2pt);

\draw[black]
(-.8,-.1) -- (-.6,-.1) -- (0,.4) -- (.6,.1) -- (.8,.1);

\draw[black]
(-.6,-.3) -- (-.6,-.1) -- (-.6,.1);

\draw[black]
(.6,.3) -- (.6,.1) -- (.6,-.1);

\draw[black]
(-.2,.6) -- (0,.4) -- (.2,.6);

\filldraw (4.4,-.1) circle (2pt);
\filldraw (5.6,.1) circle (2pt);
\filldraw (5,.4) circle (2pt);
\filldraw (5,-.4) circle (2pt);

\draw[black]
(4.2,-.1) -- (4.4,-.1) -- (5,.4) -- (5.6,.1) -- (5,-.4) -- (4.4,-.1);

\draw[black]
(4.4,-.3) -- (4.4,-.1) -- (4.4,.1);

\draw[black]
(5.6,-.1) -- (5.6,.1) -- (5.6,.3);

\draw[black]
(5.8,.1) -- (5.6,.1);

\draw[black]
(4.8,.6) -- (5,.4);

\draw[black]
(5,.-.4) -- (5.2,-.6);

\draw[black]
(2,0) -- (2.85,0);

\draw[black]
(2.85,0.15) -- (2.85,-.15) -- (3,0) -- (2.85,.15); 

\end{tikzpicture}
\caption{The vertex-to-$H$ (with $H$ being the complete graph on 4 vertices) and vertex-to-4-cycle operations.}
\label{fig:vk4}
\end{figure}
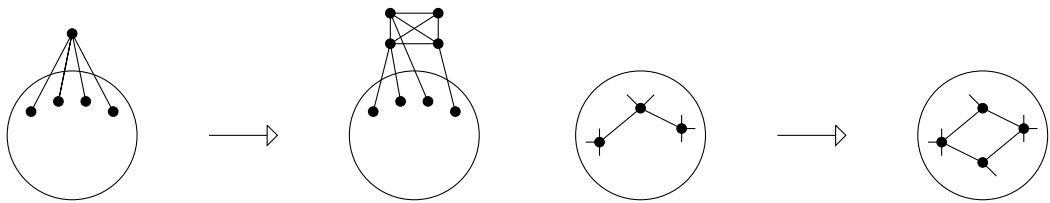
\end{center}

We shall need the following specialized versions.
First, suppose that $|V \cap \{u,v\}| = 1$.
The \emph{0-extension that adds $u$} (respectively, \emph{0-extension that adds $v$}) operation is a 0-extension where $z = u$ and $v \in V \setminus \{a,b\}$ (respectively, with $z = v$ and $u \in V \setminus \{a,b\}$).
The \emph{vertex-to-4-cycle move that adds $u$} (respectively, \emph{vertex-to-4-cycle move that adds $v$}) is a vertex-to-4-cycle move where $w=v$ and $\{w,w'\} = \{u,v\}$ (respectively, $w=u$ and $\{w,w'\} = \{u,v\}$). 
The \emph{vertex-to-$H$ move that adds $u$} (respectively, \emph{vertex-to-$H$ move that adds $v$}) is a vertex-to-$H$ move where $w = v$ and $u \in V(H) \setminus V$ (respectively, $w = u$ and $v \in V(H) \setminus V$), and the graph $H$ is $uv$-tight.

Now suppose $u,v\in V$ are two distinct vertices. 
The \emph{\(uv\)-0-extension} operation is a $0$-extension on a pair $a,b$ with \(\{a,b\}\neq \{u,v\}\).
The \emph{\(uv\)-1-extension} operation is a $1$-extension on some edge $ab$ and vertex $c$ for which $\{u,v\}$ is not a subset of $\{a,b,c\}$. 
The \emph{$uv$-vertex-to-4-cycle} and \emph{$uv$-vertex-to-$H$} moves are simply any vertex-to-4-cycle and vertex-to-$H$ moves applied to a graph containing both $u$ and $v$. 

We can immediately obtain the following result using the proof technique of \cite[Sections 5.1 and 5.2]{dew2}. 

\begin{lemma}
    Let $G$ be a graph that contains both $u$ and $v$,
    and let $G'$ be formed from $G$ by either a $uv$-0-extension or a $uv$-1-extension.
    If $G$ is $uv$-independent in a normed plane $X$,
    then $G'$ is $uv$-independent in a normed plane $X$.
\end{lemma}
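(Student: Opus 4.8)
The plan is to mimic the standard rigidity-preservation arguments for $0$- and $1$-extensions, but carried out inside the subspace $X^V/uv$ of $uv$-coincident placements rather than all of $X^V$. Recall that $uv$-independence of a graph means there is a $uv$-coincident placement $(G,p)$ with $p_u=p_v$ and $uv\notin E$ such that $\rank R(G,p)=|E|$; equivalently (by \Cref{lem:uv-perturb}) the regular $uv$-coincident placements form a dense open subset of the well-positioned ones, so it suffices to exhibit one well-positioned $uv$-coincident placement of $G'$ whose rigidity matrix has rank $|E(G')|$. First I would fix a regular (hence independent) $uv$-coincident placement $(G,p)$ of $G$, which exists by hypothesis and \Cref{lem:uv-perturb}.

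\textbf{The $uv$-0-extension.} Here $G'$ is obtained by adding a new vertex $z$ joined to $a,b$ with $\{a,b\}\neq\{u,v\}$; crucially $z\notin\{u,v\}$, so we are free to place $z$ anywhere. Extend $p$ to a placement $p'$ of $G'$ by choosing $p'_z$ generically near $p_a$ (and near $p_b$). Since $\{a,b\}\neq\{u,v\}$, at least one of $p_a-p_b$, $p_a$, $p_b$ is such that we can pick $p'_z$ making both $p'_z-p_a$ and $p'_z-p_b$ smooth and linearly independent (smoothness holds for a conull set of choices by \Cref{lem:perturb}, and linear independence holds off a measure-zero set). The rigidity matrix $R(G',p')$ has the block form $\left[\begin{smallmatrix} R(G,p) & 0 \\ * & B\end{smallmatrix}\right]$ where $B$ is the $2\times d$ block recording $\varphi_{p'_z-p_a}$ and $\varphi_{p'_z-p_b}$ in the coordinates of $z$; the two new rows are supported on the new columns with the $2\times 2$ submatrix having rows $\varphi_{p'_z-p_a},\varphi_{p'_z-p_b}$ restricted to a generic coordinate $2$-plane, which is nonsingular precisely because those functionals are linearly independent (using strict convexity of $X$ as in the remark after the definition of strict convexity). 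Hence the two new rows are independent of each other and of the rows of $R(G,p)$, so $\rank R(G',p')=\rank R(G,p)+2=|E(G)|+2=|E(G')|$, and $p'$ remains $uv$-coincident since we did not move $u,v$.

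\textbf{The $uv$-1-extension.} Here $G'$ deletes an edge $ab\in E(G)$ and adds $z$ with edges $za,zb,zc$, subject to $\{u,v\}\not\subseteq\{a,b,c\}$, so again $z\notin\{u,v\}$ and the new vertex is free. The argument is the usual perturbation: start from $(G,p)$, place $z$ at a point on the line through $p_a,p_b$ in the direction $p_a-p_b$ so that $\varphi_{p'_z-p_a}=\varphi_{p'_z-p_b}$ up to scaling agrees with $\varphi_{p_a-p_b}$ (possible since $p_a-p_b$ is smooth as $ab$ was an edge of the well-positioned $(G,p)$), making the three new rows together with the old rows have a kernel vector supported on $p'_z$; then perturb $p'_z$ off the line to a well-positioned, linearly-generic position. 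A rank computation at the degenerate placement — where the three edges $za,zb,zc$ ``simulate'' the deleted edge $ab$ — shows $\rank R(G',\tilde p)\ge |E(G)|-1+3=|E(G')|$ for the degenerate $\tilde p$; since independence is an open condition on well-positioned placements (\Cref{lem:perturb}), a nearby well-positioned $uv$-coincident placement $p'$ of $G'$ is independent, and it is still $uv$-coincident as $u,v$ were never moved. This is exactly the ``proof technique of \cite[Sections 5.1 and 5.2]{dew2}'' adapted so that all perturbations take place within $X^V/uv$, which is legitimate because the only perturbed vertex, $z$, is not in $\{u,v\}$.

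\textbf{Main obstacle.} The only genuine subtlety is checking that the required perturbations of $z$ can be made \emph{without leaving} the class of well-positioned $uv$-coincident placements, i.e.\ that perturbing $p'_z$ keeps $p'_z-p_a$, $p'_z-p_b$, $p'_z-p_c$ smooth \emph{and} keeps $(G-uv,p')$ well-positioned on the old edges — but the old edges are unaffected, and smoothness of the three new edge-directions holds for a conull set of positions of $z$ by \Cref{lem:perturb}; combined with the openness of regularity in \Cref{lem:uv-perturb} this is routine. The conditions $\{a,b\}\neq\{u,v\}$ and $\{u,v\}\not\subseteq\{a,b,c\}$ are used precisely to guarantee $z\notin\{u,v\}$, so that moving $z$ is an allowed move inside $X^V/uv$; without them the new vertex would be constrained to coincide with $u$ or $v$ and the free perturbation would be unavailable.
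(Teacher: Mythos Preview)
Your overall strategy is exactly what the paper indicates: run the standard $0$- and $1$-extension arguments from \cite[Sections~5.1 and~5.2]{dew2}, observing that the only vertex you need to move is the new vertex $z$, which lies outside $\{u,v\}$, so every perturbation stays inside $X^V/uv$. That is the paper's approach, and the core of your write-up is fine.

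There are, however, two genuine issues. First, the lemma is stated for an \emph{arbitrary} normed plane, yet your $0$-extension step explicitly invokes strict convexity to pass from linear independence of $p'_z-p_a,\,p'_z-p_b$ to linear independence of $\varphi_{p'_z-p_a},\,\varphi_{p'_z-p_b}$. That implication fails in general (this is precisely the characterisation of strict convexity quoted in the paper), so as written you have only proved the strictly convex case. The point of citing \cite{dew2} is that the argument there shows, without strict convexity, that one can \emph{choose} $p'_z$ so that the two support functionals are linearly independent; this uses the freedom to place $z$ anywhere, not just ``generically''. Note the contrast with \Cref{l:zeroext}, where the new vertex must be placed at $p_u$ and no such freedom is available --- that is exactly why \Cref{l:zeroext} does require strict convexity while the present lemma does not.

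Second, your explanation of the side conditions is wrong. The new vertex $z$ is never in $\{u,v\}$ --- it is a brand-new vertex --- so the hypotheses $\{a,b\}\neq\{u,v\}$ and $\{u,v\}\not\subseteq\{a,b,c\}$ have nothing to do with ensuring $z\notin\{u,v\}$. What they actually prevent is $z$ being \emph{adjacent} to both $u$ and $v$: since $p_u=p_v$, the rows of $R(G',p')$ for $zu$ and $zv$ would be identical and independence would fail outright. Your proof happens to use the conditions correctly (you need $p_a\neq p_b$, and you need $\varphi_{p'_z-p_c}$ not parallel to $\varphi_{p'_z-p_a}$), but the reason you give for them is not the real one. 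Finally, the phrase ``kernel vector supported on $p'_z$'' in your $1$-extension step is garbled; what you want is that a suitable combination of the rows $za$ and $zb$ reproduces the deleted row $ab$, after which the $z$-columns of $za$ and $zc$ give two further independent rows --- this is a row-space argument, not a kernel argument.
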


The next lemma shows 0-extensions that add either $u$ or $v$ preserve independence.
It should be noted that our proof technique requires strict convexity.

\begin{lemma}
\label{l:zeroext}
    Let $G=(V,E)$ be a graph that contains $u$ but not $v$, and let $X$ be a strictly convex normed plane.
    Suppose $G'$ is formed from $G$ by a 0-extension that adds $v$.
    Then $G'$ is $uv$-independent if and only if $G$ is independent.
\end{lemma}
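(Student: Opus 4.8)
The plan is to prove both implications by comparing the rigidity matrices $R(G,p)$ and $R(G',p')$. One direction passes from $G'$ to $G$ simply by restricting a placement, while the substantive direction builds a suitable $uv$-coincident placement of $G'$ from a sufficiently generic placement of $G$, and it is there that strict convexity enters. Suppose first that $G'$ is $uv$-independent and fix a well-positioned $uv$-coincident placement $p'$ of $G'$ with $\rank R(G',p') = |E(G')| = |E|+2$ (recall $uv\notin E(G')$, so $G'-uv=G'$). Let $p=p'|_V$. Every edge of $G$ is an edge of $G'$, so $p$ is well-positioned for $G$. The new vertex $v$ of $G'$ is incident only to the two added edges $va,vb$, so the two columns of $R(G',p')$ indexed by $v$ vanish on every row indexed by an edge of $E$; hence the submatrix of $R(G',p')$ on the rows $E$ and the columns indexed by $V$ is exactly $R(G,p)$. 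Deleting columns that are identically zero on a given independent set of rows preserves their independence, so $\rank R(G,p)=|E|$ and $G$ is independent.

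For the converse, suppose $G$ is independent, so every regular placement $p$ of $G$ satisfies $\rank R(G,p)=|E|$. By \Cref{lem:perturb} the regular placements of $G$ form a nonempty set which is open inside the conull set of well-positioned placements, hence of the form $U\cap W$ with $U$ open and nonempty in $X^V$ and $W$ conull; in particular it meets the complement of every Lebesgue-null set. Applying \Cref{lem:perturb} to the graph $\widehat G=(V,E\cup\{ua,ub\})$ shows that ``$p_u-p_a$ and $p_u-p_b$ are both smooth'' holds off a null set, while ``$p_u,p_a,p_b$ are collinear'' is a single nontrivial polynomial condition and so also fails only on a null set. Therefore we may fix a regular placement $p$ of $G$ for which $p_u-p_a$ and $p_u-p_b$ are smooth and $p_u,p_a,p_b$ are affinely independent. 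Define $p'$ on $V(G')=V\cup\{v\}$ by $p'|_V=p$ and $p'_v=p_u$. Then $(G',p')$ is a well-positioned $uv$-coincident framework: the edges in $E$ are unaffected, and $p'_v-p'_a=p_u-p_a$ and $p'_v-p'_b=p_u-p_b$ are smooth by the choice of $p$.

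It remains to check that $(G',p')$ is independent, i.e.\ $\rank R(G',p')=|E|+2$. Since $p_u-p_a$ and $p_u-p_b$ are linearly independent smooth points of the strictly convex normed plane $X$, their support functionals $\varphi_{p_u-p_a}$ and $\varphi_{p_u-p_b}$ are linearly independent, by the characterisation of strict convexity for normed planes recalled in \Cref{sec:introdefs}. Hence the rows of $R(G',p')$ indexed by $va$ and $vb$, restricted to the two columns indexed by $v$, are linearly independent vectors, and those two columns vanish on all rows indexed by $E$. Any linear dependence among the rows of $R(G',p')$ therefore has vanishing $va$- and $vb$-coefficients (reading off the $v$-columns), and then vanishing $E$-coefficients (restricting to the columns indexed by $V$, where the surviving equation is a dependence among the rows of $R(G,p)$, which is independent). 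So all $|E|+2$ rows are linearly independent, $(G',p')$ is independent, and $G'$ is $uv$-independent.

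I expect the main obstacle to be the converse direction, and within it two points: first, producing a single placement of $G$ that is simultaneously regular, well-positioned at the two new edges, and in general position for the triple $\{u,a,b\}$, which forces us to combine the openness of the regular set with the measure-zero nature of the remaining constraints; and second, the passage from affine independence of $\{p_u,p_a,p_b\}$ to linear independence of $\varphi_{p_u-p_a}$ and $\varphi_{p_u-p_b}$, which is exactly the step that can fail in a merely smooth (non–strictly convex) normed plane and so pins down where strict convexity is genuinely used.
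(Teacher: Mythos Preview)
Your proof is correct and follows essentially the same approach as the paper's. The only cosmetic differences are that the paper translates so that $p_u=0$ (turning your affine independence of $\{p_u,p_a,p_b\}$ into linear independence of $\{p_a,p_b\}$) and dispatches the easy direction in one line by noting $G\subset G'$, whereas you spell out the submatrix argument and the measure-theoretic justification for the choice of $p$ more carefully.
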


\begin{proof}
    We note that as $G'$ contains $G$ as a subgraph,
    if $G'$ is $uv$-independent then $G$ will be independent.
    Suppose there is an independent placement $p$ of $G$ in $X$.
    By applying translations, we may suppose that $p_u = 0$.
    Let $v_1,v_2$ be the two neighbours of $v$ in $G'$.
    We may also assume that $p_{v_1}$ and $p_{v_2}$ are linearly independent and smooth;
    indeed if this was not true, we could apply \Cref{lem:perturb} to $(G,p)$ to find a placement of $G$ where it is true.
    Define $p'$ to be the well-positioned placement of $G'$ with $p'_x = p_x$ for all $x \in V$ and $p'_v = p_u$.
    From our choice of placement of $G'$, we see that
    \begin{align*}
        R(G',p') =
        \left[
        \begin{array}{c|c}
            R(G,p) & \mathbf{0}_{|E| \times 2} \\
            \hline
            A & -\varphi_{p_{v_1}} \\
            B & -\varphi_{p_{v_2}}
        \end{array}
        \right]
    \end{align*}
    for some $1 \times 2|V|$ matrices $A$ and $B$.
    Hence $(G',p')$ is independent if and only if $\varphi_{p_{v_1}}, \varphi_{p_{v_2}}$ are linearly independent.
    Since $p_{v_1},p_{v_2}$ are linearly independent and $X$ is strictly convex,
    the pair $\varphi_{p_{v_1}}, \varphi_{p_{v_2}}$ are linearly independent as required.
\end{proof}

For the vertex-to-4-cycle move we will use the technique of \cite[Lemma 11]{JKN} to show that a vertex-to-4-cycle move which creates two coincident vertices preserves independence.
Similarly to the previous result, we will require that the normed plane in question is strictly convex.

\begin{lemma}
\label{l:fourcycleind}
    Let $G=(V,E)$ and $G'=(V',E')$ be graphs and let $X$ be a strictly convex normed plane.
    \begin{enumerate}[(i)]
        \item If $G$ is independent in $X$ and $G'$ is formed from $G$ by a vertex-to-4-cycle move that adds either $u$ or $v$, then $G'$ is $uv$-independent in $X$.
        \item If $G$ is $uv$-independent in $X$ and $G'$ is formed from $G$ by a $uv$-vertex-to-4-cycle move, then $G'$ is $uv$-independent in $X$.
    \end{enumerate}
\end{lemma}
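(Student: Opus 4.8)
The plan is to follow the strategy of \cite[Lemma 11]{JKN}, reducing each statement to the study of the rigidity matrix of an explicit placement obtained by perturbing a good placement of the smaller graph $G$. I treat both parts uniformly: in all three sub-cases we split a vertex $w$ of $G$ (the neighbours of which I list as $v_1,v_2,v_3,\dots,v_k$) into two vertices $w,w'$, add the four edges $wv_1,w'v_1,wv_2,w'v_2$, and redistribute the edges $wv_3,\dots,wv_k$ between $w$ and $w'$. In part (i) one of $w,w'$ equals $u$ and the other equals $v$ (so $p_w=p_{w'}$ is forced in the resulting $uv$-coincident framework), while in part (ii) $u,v$ are both among the untouched vertices or among $v_1,\dots,v_k$ and again the split is into a genuinely $uv$-coincident framework only if we are also in the special position $p_w=p_{w'}$. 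The key point is that in every case the relevant framework to analyse is $(G'-uv,p')$ where $p'_w=p'_{w'}=p_w$ and $p'_x=p_x$ for $x\neq w'$; because $p'_w=p'_{w'}$, the support functionals $\varphi_{p'_w-p'_{v_i}}=\varphi_{p'_{w'}-p'_{v_i}}=\varphi_{p_w-p_{v_i}}$ coincide for the two copies, so the four new rows of $R(G'-uv,p')$ corresponding to $wv_1,w'v_1,wv_2,w'v_2$ involve only the three support functionals $\varphi_{p_w-p_{v_1}},\varphi_{p_w-p_{v_2}}$ and, in the redistributed edges, $\varphi_{p_w-p_{v_j}}$ for $j\geq 3$.

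First I would fix, by \Cref{lem:perturb}, an independent (resp.\ $uv$-independent) well-positioned placement $p$ of $G$ in which $p_w,p_{v_1},p_{v_2}$ are pairwise linearly independent and all the points $p_w-p_{v_i}$ are smooth; regularity together with \Cref{lem:perturb} guarantees such a $p$ exists since independence is an open condition. Then I would write down $R(G'-uv,p')$ in block form with the rows of $R(G-w+\text{(edges at }w),p)$ on top and the contributions of the new/redistributed edges below, and perform elementary row operations: subtract the $wv_1$-row from the $w'v_1$-row and the $wv_2$-row from the $w'v_2$-row. Since the corresponding support functionals agree, the two difference rows are supported only on the $w'$-columns and read $\varphi_{p_w-p_{v_1}}$ and $\varphi_{p_w-p_{v_2}}$ respectively. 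By strict convexity these two functionals are linearly independent (here I use exactly the property quoted in \Cref{sec:introdefs}: two linearly independent smooth points have linearly independent support functionals). Hence the $w'$-columns can be cleared, and the rank of $R(G'-uv,p')$ equals $|E'|$ if and only if the remaining matrix — which is, up to reindexing, the rigidity matrix of a placement $\bar p$ of a graph $\bar G$ obtained from $G$ by possibly adding the extra edges $wv_j$ ($j\geq 3$) that were assigned to $w'$, placed at $p_{v_j}$-dependent positions — has full row rank. Since $\bar G$ is $G$ with (at most) some edges at $w$ moved around but no new edge not already an edge of $G$ (the reassigned edges $w'v_j$ replace edges $wv_j$ of $G$, one for one), $\bar G$ is isomorphic to $G$, and $(\bar G,\bar p)$ is independent because independence is generic/open and $\bar p$ can be taken regular after a further application of \Cref{lem:perturb}.

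The main obstacle I anticipate is the bookkeeping in the final step, namely verifying carefully that after clearing the $w'$-columns the surviving matrix genuinely is (a placement of) a copy of $G$ with the same multiset of edges, rather than a graph with an extra edge or a changed incidence, and that the placement obtained is well-positioned and can be perturbed to a regular one without destroying the linear-independence choices made for $p_w,p_{v_1},p_{v_2}$. One must also be slightly careful in part (i) to record that the resulting $uv$-coincident framework is \emph{independent} in the sense defined in \Cref{sec:introdefs} (i.e.\ $uv\notin E'$, which holds because the move never creates the edge $uv$, together with full row rank of $R(G',p')$), and in part (ii) to handle the degenerate possibility that $u$ or $v$ lies among $\{v_1,v_2\}$, in which case the same row operations go through verbatim because $p'_w=p'_{w'}=p_w\neq p_u=p_v$ keeps all relevant difference vectors nonzero and the strict-convexity argument is unaffected. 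Once these checks are in place, both (i) and (ii) follow, and I would close the argument by noting that the existence of one independent placement $p'$ of $G'$ is, by definition, exactly $uv$-independence of $G'$.
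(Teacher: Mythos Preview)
Your strategy --- place $w'$ on top of $w$, then use elementary row operations and strict convexity to reduce $R(G',p')$ to a matrix built from $R(G,p)$ plus two extra rows governed by $\varphi_{p_w-p_{v_1}},\varphi_{p_w-p_{v_2}}$ --- is exactly the paper's strategy, run in the reverse direction (the paper introduces an auxiliary graph $G''$, shows $R(G'',p')$ is block-triangular over $R(G,p)$, and then row-reduces $R(G'',p')$ to $R(G',p')$). So the idea is right.

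There is, however, a concrete bookkeeping error that breaks the argument as written. When you subtract the $wv_1$-row from the $w'v_1$-row, the $v_1$-columns do cancel, but the $w$-columns do \emph{not}: the $wv_1$-row carries $\varphi_{p_w-p_{v_1}}$ in the $w$-columns and zero in the $w'$-columns, while the $w'v_1$-row carries zero in the $w$-columns and $\varphi_{p_w-p_{v_1}}$ in the $w'$-columns. The difference is therefore supported on \emph{both} the $w$- and $w'$-columns, not only on the $w'$-columns. This matters downstream: if you now use these two rows to clear the $w'$-entries of a reassigned row $w'v_j$ (for $j\geq 3$), the $w$-columns pick up exactly $\varphi_{p_w-p_{v_j}}$, and the cleared row becomes literally the $wv_j$-row of $R(G,p)$. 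In other words, after correct clearing the residual matrix is \emph{equal} to $R(G,p)$ at the original placement $p$, not some perturbed $(\bar G,\bar p)$ requiring a further appeal to \Cref{lem:perturb}. Your closing sentence --- ``$(\bar G,\bar p)$ is independent because independence is generic/open and $\bar p$ can be taken regular after a further application of \Cref{lem:perturb}'' --- is both unnecessary and illegitimate: you are analysing a fixed $p'$ and cannot perturb mid-argument. Once you correct the support of the difference rows, the residual is $R(G,p)$ on the nose, independence is the hypothesis, and the proof closes without any extra perturbation.
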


\begin{proof}
Suppose that $G$ is $uv$-independent (respectively, independent).
Using \Cref{lem:uv-perturb} (respectively, \Cref{lem:perturb}),
choose a $uv$-independent (respectively, independent) placement $p$ of $G$ in $X$ so that $p_w, p_{v_1}, p_{v_2}$ are not collinear.
By applying translations to $p$, we shall assume that $p_w = 0$.
Now define $p'$ to be the placement of $G'$ with $p'_x=p_x$ for all $x \in V$ and $p'_{w'} = p_w$.
The pair $(G',p')$ form a well-positioned $uv$-coincident framework due to our choice of $p'$.
Since $X$ is strictly convex,
the pair $\varphi_{p_{v_1}},\varphi_{p_{v_2}}$ are linearly independent.
Define $G''$ to be the graph formed from $G'$ by replacing each edge $w'v_i$ for $3 \leq i \leq k$ with the edge $w v_i$.
Then
\begin{align*}
    R(G'',p') = 
    \left[
    \begin{array}{c|c}
    R(G, p) &  \mathbf{0}_{|E| \times 2} \\
    \hline
    A & \varphi_{p'_{w'} - p'_{v_1}} \\
    B & \varphi_{p'_{w'} - p'_{v_2}}
    \end{array}
    \right] = 
    \left[
    \begin{array}{c|c}
    R(G, p) &  \mathbf{0}_{|E| \times 2} \\
    \hline
    A & -\varphi_{p_{v_1}} \\
    B & -\varphi_{p_{v_2}}
    \end{array}
    \right],
\end{align*}
for some $1 \times 2|V|$ matrices $A$ and $B$.
Since $p_{v_1},p_{v_2}$ are linearly independent and $X$ is strictly convex,
the pair $\varphi_{p_{v_1}}, \varphi_{p_{v_2}}$ are linearly independent.
Hence $R(G'',p')$ has linearly independent rows.
To prove that $G'$ is $uv$-independent in $X$ we will describe a series of rank-preserving row operations that will form $R(G',p')$ from $R(G'',p')$.

As $\varphi_{p_{v_1}}$ and $\varphi_{p_{v_2}}$ are linearly independent,
there exist for each $3 \leq i \leq k$ a unique pair of values $\alpha_i$ and $\beta_i$ such that
\begin{align*}
    \alpha_i \varphi_{p_{v_1}} + \beta_i \varphi_{p_{v_2}} = \varphi_{p_{v_i}} = \varphi_{p'_{v_i} - p'_{z}},
\end{align*}
where $z \in \{w,w'\}$ is chosen so that $v_i z \in E(G')$.
For $1 \leq i \leq k$, 
let $(wv_i)$ denote the row of $R(G'',p')$ corresponding to the edge $wv_i$, 
and similarly let $(w'v_1)$ and $(w'v_2)$ denote the rows of $R(G'',p')$ corresponding to edges $w'v_1$ and $w'v_2$ respectively.
For $v_i \in N_{G'}(w')$,
let $[w'v_i]$ denote the row of $R(G', p')$ corresponding to the edge $w'v_i$.
Now, for all $v_i \in N_{G'}(w') \backslash \{v_1, v_2\}$, 
we have 
\begin{align*}
    [w'v_i] = (wv_i)-\alpha_i(wv_1)-\beta_i(wv_2)+\alpha_i(w'v_1)+\beta_i(w'v_2).
\end{align*}
These row operations, when applied $R(G'',p')$, preserve linear independence and form the matrix $R(G', p')$. 
Therefore the rows of $R(G', p')$ are linearly independent.
\end{proof}

We now prove that vertex-to-$H$ operations that add either $u$ or $v$ and $uv$-vertex-to-$H$ operations will preserve $uv$-independence.

\begin{lemma}
\label{lem:extension}
    Let $G=(V,E)$ and $G'=(V',E')$ be graphs and let $X$ be any normed plane.
    \begin{enumerate}[(i)]
        \item\label{lem:extension1} Suppose $G$ is independent in $X$ and $G'$ is formed from $G$ by a vertex-to-$H$ move that adds either $u$ or $v$.
        If $H$ is minimally $uv$-rigid in $X$, then $G'$ is $uv$-independent in $X$.
        \item\label{lem:extension2} Suppose $G$ is $uv$-independent in $X$ and $G'$ is formed from $G$ by a $uv$-vertex-to-$H$ move.
        If $H$ is minimally rigid in $X$, then $G'$ is $uv$-independent in $X$.
    \end{enumerate}
\end{lemma}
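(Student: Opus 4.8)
The plan is to prove both parts simultaneously by constructing a well-positioned $uv$-coincident placement of $G'$ whose rigidity matrix has full row rank, obtained by combining an independent (respectively $uv$-independent) placement of $G$ with a minimally $uv$-rigid (respectively minimally rigid) placement of $H$ that has been shrunk towards the vertex of $G$ being replaced by $H$. The argument blends the block-matrix row-reduction used in \Cref{l:fourcycleind} with a limiting argument.

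First I would set up notation. Let $w$ be the vertex of $G$ replaced by the vertex-to-$H$ move, so $V(G')=(V\setminus\{w\})\cup V(H)$ (with $w\in V(H)$), and for $x\in N_G(w)$ let $xy(x)$ be the edge of $G'$ replacing $xw$. Choose $p$ an independent placement of $G$ in part \eqref{lem:extension1} (respectively a $uv$-independent placement in part \eqref{lem:extension2}); then $R(G,p)$ has full row rank $|E(G)|$, $p$ is well-positioned, and in part \eqref{lem:extension2} one has $p_u=p_v$. Translate so $p_w=0$. Choose $q$ a minimally $uv$-rigid placement of $H$ in part \eqref{lem:extension1} (respectively a minimally rigid placement in part \eqref{lem:extension2}); then $uv\notin E(H)$ and $R(H,q)$ has full row rank $|E(H)|$, and we translate so $q_w=0$ (in part \eqref{lem:extension1} this gives $q_u=q_v=0$). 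Let $\iota\colon X^{V(G)}\to X^{V(G')}$ be the linear map acting as the identity on the $V\setminus\{w\}$ coordinates and sending every $V(H)$ coordinate to the $w$ coordinate. For $\epsilon>0$ set $\tilde p^{\,\epsilon}_x=p_x$ for $x\in V\setminus\{w\}$ and $\tilde p^{\,\epsilon}_y=\epsilon q_y$ for $y\in V(H)$; then $\tilde p^{\,\epsilon}$ is $uv$-coincident and $\tilde p^{\,\epsilon}\to\iota(p)$ as $\epsilon\to 0$. (When $w\in\{u,v\}$ in part \eqref{lem:extension2} the coincidence $\tilde p^{\,\epsilon}_u=\tilde p^{\,\epsilon}_v$ holds because $p$ itself is $uv$-coincident; the degenerate case $|V(H)|=1$ gives $G'=G$.)

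Next I would analyse the rigidity matrix. Since $\varphi_{\lambda z}=\lambda\varphi_z$ for $\lambda>0$, dividing the rows of $R(G'-uv,\tilde p^{\,\epsilon})$ indexed by $E(H)$ by $\epsilon$ produces a matrix $M_\epsilon$ whose rows are: the rows of $R(G-w,p|_{V\setminus\{w\}})$, supported on the $V\setminus\{w\}$ columns; for each $x\in N_G(w)$ a row with $\varphi_{p_x-\epsilon q_{y(x)}}$ in the $x$ columns and $-\varphi_{p_x-\epsilon q_{y(x)}}$ in the $y(x)$ columns; and the rows of $R(H,q)$, supported on the $V(H)$ columns. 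Because $\varphi$ is continuous at smooth points and each $p_x=p_x-p_w$ and each $q_x-q_y$ ($xy\in E(H)$) is smooth, $M_\epsilon$ converges entrywise as $\epsilon\to 0$ to the matrix $M_0$ obtained by replacing $\varphi_{p_x-\epsilon q_{y(x)}}$ with $\varphi_{p_x}$. The key claim is $\rank M_0=|E(G')|=|E(G)|+|E(H)|$. To see this, suppose a linear combination of the rows of $M_0$ with coefficients $c_e$ vanishes, and evaluate it on vectors in $\im(\iota)$: each $E(H)$ row vanishes on $\im(\iota)$ (it equals $\varphi_{q_x-q_y}$ applied to the common displacement of the $V(H)$ coordinates, which is $0$), while the $E(G-w)$ rows and the replacement rows, evaluated on $\iota(\dot p)$, reproduce exactly $R(G,p)\dot p$ (the replacement row for $xy(x)$ gives $\varphi_{p_x}(\dot p_x)-\varphi_{p_x}(\dot p_w)=\varphi_{p_x-p_w}(\dot p_x-\dot p_w)$ since $p_w=0$, the $xw$-row of $R(G,p)$). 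Hence the combination of $R(G,p)$-rows over $E(G)$ vanishes, so $c_e=0$ for every edge of $G$, i.e.\ for $E(G-w)$ and every replacement edge; the remaining relation is a vanishing combination of the rows of $R(H,q)$, forcing $c_e=0$ for $e\in E(H)$ too. Thus $M_0$ has full row rank, and by lower semicontinuity of rank so does $M_\epsilon$, hence so does $R(G'-uv,\tilde p^{\,\epsilon})$, for all small $\epsilon>0$.

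Finally I would handle well-positionedness, which I expect to be the main obstacle (the block-matrix computation and the $\im(\iota)$-restriction being routine once set up): since smooth points need not form an open set, $\tilde p^{\,\epsilon}$ itself need not be well-positioned. To fix this, replace $q$ by a nearby $uv$-coincident well-positioned placement $\hat q=\hat q(\epsilon)$ of $H$ chosen so that, in addition, $p_x-\epsilon\hat q_{y(x)}$ is smooth for every $x\in N_G(w)$. The placements $\hat q$ that fail this form a finite union of sets of measure zero (preimages of the measure-zero set of non-smooth points of $X$ under coordinate-difference maps), so a suitable $\hat q(\epsilon)$ exists arbitrarily close to $q$, exactly as in the proofs of \Cref{lem:perturb} and \Cref{lem:uv-perturb}. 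Redefining $\tilde p^{\,\epsilon}$ using $\hat q(\epsilon)$, it is now a well-positioned $uv$-coincident placement of $G'$ with $uv\notin E(G')$, $\hat q(\epsilon)\to q$, and the argument of the previous paragraph applies verbatim to give $\rank R(G'-uv,\tilde p^{\,\epsilon})=|E(G')|$ for small $\epsilon>0$. Hence $(G',\tilde p^{\,\epsilon})$ is $uv$-independent, so $G'$ is $uv$-independent.
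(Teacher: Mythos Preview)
Your proposal is correct and follows essentially the same strategy as the paper: shrink a minimally ($uv$-)rigid placement of $H$ towards $p_w$, rescale the $H$-rows of the rigidity matrix, and use continuity of $z\mapsto\varphi_z$ at smooth points to pass to a limit matrix with full row rank. The paper writes the limit matrix in block-triangular form (after deleting the $w$-columns) and simply asserts it has independent rows; your $\im(\iota)$-restriction argument is exactly what justifies that assertion, so you are filling in a detail the paper leaves implicit rather than doing something different. The only substantive variation is in the perturbation step: the paper fixes the $V(H)$-coordinates at $q/n$ and perturbs the $V\setminus\{w\}$-coordinates, whereas you fix $p$ on $V\setminus\{w\}$ and perturb $q$; both work. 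One small point to tidy: in part~(ii) when $w\in\{u,v\}$ your phrase ``$uv$-coincident placement $\hat q$ of $H$'' is not meaningful (only one of $u,v$ lies in $V(H)$); what you actually need there is to perturb within the affine slice $\{\hat q:\hat q_w=0\}$, so that $\tilde p^{\,\epsilon}_u=\tilde p^{\,\epsilon}_v=0$ is maintained. This is the same conull-subset argument and causes no difficulty.
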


\begin{proof}
If (\ref{lem:extension1}) holds, let $(H,q)$ be a minimally rigid $uv$-coincident framework in $X$ and $(G,p)$ be an independent framework in $X$,
    while if (\ref{lem:extension2}) holds, let $(H,q)$ be a minimally rigid framework in $X$ and $(G,p)$ be an independent $uv$-coincident framework in $X$.
    By applying translations we may assume $q_w = p_w = 0$.
    For any matrix $A$ with columns corresponding to a vertex subset of $V(G) \cup V(H)$,
    define $A_w$ to be the matrix where we delete all columns corresponding to the vertex $w$.
    Given the fixed basis $b_1,b_2 \in X$ used to define our rigidity matrices in $X$,
    we define the matrix
    \begin{align*}
        M :=
        \left[
        \begin{array}{c|c}
            R(H,q)_w & \mathbf{0}_{|E(H)| \times (2|V|-2)} \\
            \hline
            A & R(G,p)_w
        \end{array}
        \right]
    \end{align*}
    where $A$ is the $|E| \times (2|V(H)|-2)$ matrix with entries
    \begin{align*}
        A_{e,(y,i)} =
        \begin{cases}
            \varphi_{p_y-p_w}(b_i) &\text{if } e = xw,\\
            0 &\text{otherwise.}
        \end{cases}
    \end{align*}
    By our choices of $p$ and $q$,
    the matrix $M$ has linearly independent rows.
    
    For each $n \in \mathbb{N}$,
    choose a well-positioned $uv$-coincident framework $(G',p^n)$ where $p^n_x = q_x / n$ for each $x \in V(H)$ and $\|p^n_x - p_x\|< 1/n$ for each $x \in V$ (this framework can be seen to exist from \Cref{lem:uv-perturb}).
    Define $M_n$ to be the matrix formed from multiplying each row of $R(G',p^n)_w$ corresponding to an edge of $H$ by $n$.
    As the map $x \mapsto \varphi_x$ is continuous on the set of smooth points of $X$ (see \cite[Theorem 25.5]{rockafellar}),
    the sequence of matrices $(M_n)_{n \in \mathbb{N}}$ will converge to $M$.
    Hence for sufficiently large $N \in \mathbb{N}$,
    the matrix $M_{n_0}$ (and hence $R(G',p^{n_0})_w$) will have linearly independent rows.
    By setting $p'=p^{n_0}$, we obtain our desired independent $uv$-coincident framework $(G',p')$.
\end{proof}

\section{Characterising coincident point independence}
\label{sec:char}

With the geometric results of the previous section in hand,
we can use the combinatorics of \cite{JKN} to prove the difficult sufficiency direction of our main result on coincident frameworks.
We begin with the following result which can be extracted from the proof of \cite[Theorem 4]{JKN}.

\begin{proposition}[\cite{JKN}]\label{p:uv-construction}
    Any $uv$-tight graph on at least five vertices can be constructed from either a $(2,2)$-tight graph with at least four vertices that contains exactly one of $u$ and $v$, or from the graph consisting of two copies of $K_4$ intersecting in a single vertex $x\notin \{ u,v\}$ where $u$ and $v$ are in different copies of $K_4$ (see \Cref{fig:2k4s}), by a sequence of 0-extensions that add $u$ or $v$, vertex-to-4-cycle and vertex-to-$H$ moves that add $u$ or $v$,
    $uv$-0- and $uv$-1-extensions, and
    $uv$-vertex-to-4-cycle and $uv$-vertex-to-$H$ moves.
\end{proposition}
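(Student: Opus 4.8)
The plan is to prove the proposition by induction on $|V(G)|$, following the scheme of \cite{JKN}: I will show that a $uv$-tight graph $G$ with $|V(G)|\geq 6$ which is not the two-$K_4$ graph of \Cref{fig:2k4s} always admits the inverse of one of the permitted operations, and that this inverse produces either a smaller $uv$-tight graph, on which one recurses, or a $(2,2)$-tight graph containing exactly one of $u$ and $v$, at which point the construction terminates. Two facts drive everything. First, the edge count $|E(G)|=2|V(G)|-2$ gives $\sum_{x\in V}(\deg_G(x)-4)=-4$, so $G$ has at least two vertices of degree $2$ or $3$, and a vertex of degree $\leq 1$ is impossible because it would force $i_G(V\setminus\{x\})\geq 2|V(G)|-3>2|V(G)|-4=\val(V\setminus\{x\})$. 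Second, the defining inequalities $i_G(U)\leq\val(U)$ and $i_G(\mathcal{X})\leq\val(\mathcal{X})$ are submodular enough to be uncrossed, which is what lets one locate the reducing move.

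\emph{Base case.} Since $uv$-sparsity forces $uv\notin E$ and $i_G(\{u,v\})\leq 0$, there is no $uv$-tight graph on at most four vertices, so the base case is $|V(G)|=5$, where $|E(G)|=8$ and $\sum\deg=16$. A short degree-sequence check shows that a graph realising this with minimum degree $3$ must be the wheel $W_4$, and evaluating $\val$ on the $uv$-compatible family formed by the three triangles through $\{u,v\}$ shows $W_4$ is never $uv$-sparse; hence $G$ has a vertex $w$ of degree $2$. If $w\notin\{u,v\}$ then $G-w$ would be a four-vertex graph with six edges omitting $uv$, which is impossible, so $w\in\{u,v\}$; deleting it leaves $K_4$, which is $(2,2)$-tight and contains exactly one of $u,v$, and $G$ is recovered by the corresponding $0$-extension that adds $u$ or $v$. (Together with a direct check of the remaining small values of $|V(G)|$, this anchors the induction.)

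\emph{Inductive step, low-degree vertices.} Now let $|V(G)|\geq 6$ and choose $w$ with $\deg_G(w)\in\{2,3\}$. If $\deg_G(w)=2$ and $w\in\{u,v\}$, then $G-w$ is $(2,2)$-tight, contains exactly one of $u,v$, has at least five vertices, and $uv\notin E$ guarantees that the neighbours of $w$ are not $v$ (resp.\ $u$), so $G$ arises by a $0$-extension that adds $u$ or $v$; we stop. If $\deg_G(w)=2$ and $w\notin\{u,v\}$, then $G-w$ inherits both families of sparsity inequalities and has $2|V(G-w)|-2$ edges, so it is $uv$-tight; provided $N_G(w)\neq\{u,v\}$ this exhibits $G$ as a $uv$-$0$-extension of $G-w$, and one handles the exceptional configuration $N_G(w)=\{u,v\}$ either by observing, via the family inequality on triples through $\{u,v\}$ (which caps the number of common neighbours of $u$ and $v$), that it does not arise for the relevant $G$, or by switching to one of the other guaranteed low-degree vertices. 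In every such case the induction hypothesis applies to the resulting graph, or we have reached a base graph.

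\emph{Inductive step, degree-3 vertices, and the main obstacle.} The substantive case is minimum degree $3$. Fix a degree-$3$ vertex $w$ with neighbours $x,y,z$. When $w\notin\{u,v\}$ and $\{u,v\}\not\subseteq\{x,y,z\}$ one tries the inverse of a $uv$-$1$-extension: delete $w$ and add one of the non-edges among $xy,yz,zx$ so that the result is again $uv$-tight. If all three candidate moves fail, each is blocked by a subset or a $uv$-compatible family at which the corresponding inequality is tight, and the crux of the argument is to uncross these blocking objects using submodularity of $i_G$ and of the family count; from the resulting structure one reads off either a copy of a small $(2,2)$-tight or $uv$-tight graph $H$ attached to the rest of $G$ at a single vertex, giving an inverse vertex-to-$H$ move, or a pair of vertices that can be identified, giving an inverse vertex-to-$4$-cycle move. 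When $w$ equals $u$ or $v$, or is adjacent to both, the available inverse moves are precisely the ``$\dots$ that adds $u$/$v$'' variants, which is why those appear in the statement; and the one configuration admitting no inverse move of any kind is exactly the two-$K_4$ graph, whose only low-degree vertices have degree $3$, where every candidate $1$-extension creates a parallel edge and there is no vertex-to-$H$ or vertex-to-$4$-cycle substructure, so it must be listed as a base graph. The hard part, and the reason the argument is genuinely the content of the proof of \cite[Theorem 4]{JKN}, is this simultaneous uncrossing of tight subsets and tight $uv$-compatible families while tracking the piecewise definition of $\val$ (the exceptional values $t_U\in\{2,3,4\}$, in particular $t_{\{u,v\}}=4$) and which side $u$ and $v$ land on, and then verifying in each resulting case that the inverse move is one of those permitted and that what remains is $uv$-tight (or $(2,2)$-tight containing exactly one of $u,v$).
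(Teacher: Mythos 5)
The paper itself does not prove this proposition; it is stated as extractable from the proof of \cite[Theorem 4]{JKN}, so the only fair comparison is with that argument, whose strategy your outline follows. The pieces you actually carry out are essentially correct: minimum degree at least $2$, the existence of at least two vertices of degree $2$ or $3$, the case $|V|=5$ (the wheel $W_4$ is the unique candidate with minimum degree $3$ and is excluded by the three-triangle $uv$-compatible family, so there is a degree-$2$ vertex which must be $u$ or $v$, leaving $K_4$), and the degree-$2$ reductions. One small inaccuracy there: the bound of two common neighbours of $u$ and $v$ does not by itself exclude a degree-$2$ vertex $w\notin\{u,v\}$ with $N_G(w)=\{u,v\}$; the correct exclusion uses, e.g., the $uv$-compatible family $\{V\setminus\{w\},\{u,v,w\}\}$, which gives $i_G(\mathcal{X})=2|V|-2>2|V|-3=\val(\mathcal{X})$ when $|V|\geq 5$.

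The genuine gap, which you flag yourself, is that the entire degree-$3$ case is described rather than proved: the uncrossing of tight sets and tight $uv$-compatible families blocking all candidate inverse $uv$-$1$-extensions, the derivation from that structure of an admissible inverse vertex-to-$4$-cycle or vertex-to-$H$ move, and the verification that the only irreducible configuration is the two-$K_4$ graph of \Cref{fig:2k4s} are exactly the content of \cite[Theorem 4]{JKN}, and asserting that the counts are ``submodular enough to be uncrossed'' does not establish any of it. A concrete instance of what is skipped: if the chosen degree-$3$ vertex is $u$ or $v$ itself, no inverse $1$-extension is available at all, since the operation list contains no $1$-extension that adds $u$ or $v$; one must then show that a $0$-extension, vertex-to-$4$-cycle or vertex-to-$H$ move that adds $u$ or $v$ (with $H$ $uv$-tight) can be inverted, and the sketch offers no argument for this, nor for the claim that one may always ``switch to another low-degree vertex'' in awkward configurations, nor for why the two-$K_4$ graph is the unique dead end. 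As written, the proposal is a sound high-level roadmap of the cited proof, but it does not constitute a proof of the proposition.
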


 \begin{figure}[htp]
     \begin{tikzpicture}[scale=0.8]
    \node (v) at (2.4,0) {$v$};
    \node (u) at (-2.4,0) {$u$};
    \node (x) at (0,-0.4) {$x$};
    
    \node[vertex] (a) at (2,0) {};
	\node[vertex] (b) at (1,-1) {};
    \node[vertex] (c) at (1,1) {};
    
	\node[vertex] (d) at (0,0) {};
	
    \node[vertex] (e) at (-2,0) {};
	\node[vertex] (f) at (-1,-1) {};
    \node[vertex] (g) at (-1,1) {};
	
	\draw[edge] (a)edge(b);
	\draw[edge] (a)edge(c);
	\draw[edge] (a)edge(d);
	\draw[edge] (b)edge(c);
	\draw[edge] (b)edge(d);
	\draw[edge] (c)edge(d);
    
	\draw[edge] (d)edge(e);
	\draw[edge] (d)edge(f);
	\draw[edge] (d)edge(g);
	\draw[edge] (e)edge(f);
	\draw[edge] (e)edge(g);
	\draw[edge] (f)edge(g);
\end{tikzpicture}
\caption{A $uv$-tight graph that is one of the base graphs of the construction described in \Cref{p:uv-construction}.}
\label{fig:2k4s}
\end{figure}
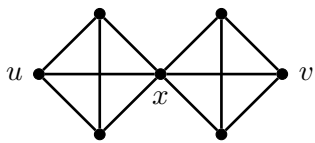

We will also require the following lemmas.

\begin{lemma}\label{l:small}
    Let $G=(V,E)$ be a graph with at most 4 vertices that contains both $u$ and $v$,
    and let $X$ be a normed plane.
    Then $G$ is $uv$-sparse if and only if it is $uv$-independent in $X$.
\end{lemma}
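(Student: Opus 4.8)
The plan is to prove \Cref{l:small} by a direct case analysis on the (at most four) vertices of $G$, since there are only finitely many graphs to consider once we restrict to simple $uv$-sparse graphs containing both $u$ and $v$ with $uv\notin E$. First I would observe that one direction is essentially free: if $G$ is $uv$-independent in $X$ then it has an independent $uv$-coincident placement $(G,p)$, and one must check that $uv$-sparsity is a necessary condition. For this I would argue that for any $U\subseteq V$ with $|U|\geq 2$, the rows of $R(G-uv,p)$ indexed by $E(G[U])$ are linearly independent, and a dimension count on the relevant coordinate subspace (using that $p_u=p_v$, so $G[U]$ effectively lives on $|U|-1$ distinct points when $u,v\in U$, giving at most $2(|U|-1)=2|U|-2$ but with the extra drop captured by $t_U$ when $|U|\in\{2,3\}$ or $U=\{u,v\}$) yields $i_G(U)\leq\val(U)$; the analogous argument over a $uv$-compatible family $\mathcal{X}$, using the union bound on edges together with the inclusion-exclusion-style term $-2(k-1)$ coming from the overlap in the vertex-coordinate space at the common pair $u,v$, gives $i_G(\mathcal{X})\leq\val(\mathcal{X})$. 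Alternatively, and more cleanly, this direction follows from \Cref{c:rankformula} applied to the framework $(G-uv,p)$ after noting which covers are relevant.

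For the substantive direction, suppose $G$ is $uv$-sparse; I would enumerate the possibilities. Since $uv\notin E$ and $G$ is simple with $|V|\leq 4$, write $V=\{u,v\}$, $V=\{u,v,w\}$, or $V=\{u,v,w_1,w_2\}$. In the first case $E=\emptyset$ and the claim is trivial. In the three-vertex case the constraint $i_G(U)\leq\val(U)$ with $U=\{u,v,w\}$ gives $|E|\leq 2\cdot3-3=3$, and $i_G(\{u,w\}),i_G(\{v,w\})\leq 1$, so the possible graphs are subgraphs of the path $u$--$w$--$v$ together with possibly... but there is no $uv$ edge, so $G\subseteq\{uw,vw\}$ with at most $2$ edges; for each such $G$ I would exhibit an explicit independent $uv$-coincident placement using \Cref{l:zeroext} (a $0$-extension adding $v$ onto an independent one- or two-vertex graph) or direct inspection. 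In the four-vertex case the binding constraints are: $i_G(V)\leq 2\cdot4-2=6$; $i_G(\{u,v,w_i\})\leq 3$ for $i=1,2$; $i_G(\{u,w_1,w_2\}),i_G(\{v,w_1,w_2\})\leq 3$; $i_G(\{w_1,w_2\}),i_G(\{u,w_i\}),i_G(\{v,w_i\})\leq 1$; and crucially the $uv$-compatible family $\mathcal{X}=\{\{u,v,w_1\},\{u,v,w_2\}\}$ giving $i_G(\mathcal{X})\leq\val(\mathcal{X})=2+(6-3-2)+(6-3-2)=4$. I would use these to pin down the finitely many maximal (i.e.\ $uv$-tight, $|E|=6$) graphs up to the symmetry swapping $w_1\leftrightarrow w_2$ and the symmetry swapping $u\leftrightarrow v$, and then for each maximal graph produce an explicit independent $uv$-coincident placement in $X$; sparsity of every subgraph then follows since independence is inherited by subframeworks.

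The construction of the explicit placements is where I expect the real work to lie, and the key tool is strict convexity (or, where the paper's normed plane is merely required smooth, a perturbation argument via \Cref{lem:uv-perturb}). Concretely, for each candidate $uv$-tight graph $G$ on four vertices I would pick $p_u=p_v=0$ and choose $p_{w_1},p_{w_2}$ generic (linearly independent, smooth, with $p_{w_1}-p_{w_2}$ smooth), write down $R(G-uv,p)$ as a square $6\times 6$ matrix in block form with a $2\times 2$ block indexed by $w_1,w_2$ and rows split according to whether an edge touches $\{u,v\}$, and verify nonsingularity; the determinant factors through the determinant of a $2\times2$ matrix whose rows are support functionals of two linearly independent points, which is nonzero exactly by strict convexity (the same mechanism used in \Cref{l:zeroext} and \Cref{l:fourcycleind}). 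The main obstacle is simply ensuring the case list is exhaustive and that no "exceptional" four-vertex graph — in particular $K_{2,3}$-like configurations or the graph with a doubled triangle — slips through as $uv$-sparse but fails to be $uv$-independent; the $uv$-compatible family bound $i_G(\mathcal{X})\leq 4$ is precisely what rules out the problematic $K_{2,3}$ with parts $\{u,v\}$ and $\{w_1,w_2,\cdot\}$ (as in the remark after the definition of $uv$-sparse), so I would be careful to invoke it. Once the maximal cases are handled, monotonicity of independence under taking subgraphs closes the argument.
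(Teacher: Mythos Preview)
Your plan is workable in spirit but misses the observation that collapses this lemma to a couple of lines, and it contains a concrete miscount in the four-vertex case.

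The paper's proof rests on the fact that on at most four vertices \emph{every} simple graph with $uv\notin E$ is already $uv$-sparse: on $\{u,v,w_1,w_2\}$ the only $uv$-compatible family that is not obviously slack is $\{\{u,v,w_1\},\{u,v,w_2\}\}$, with value $4$, and since $uv\notin E$ at most four edges lie in those two triangles. So ``$uv$-sparse'' and ``$uv\notin E$'' coincide in this range, and the forward direction is just the definitional remark that $uv$-independence forces $uv\notin E$; your rank/cover argument for that direction is essentially the machinery of the proof of \Cref{thm:uvmatroid} and is not needed here.

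For the reverse direction you propose to enumerate the $uv$-tight graphs on four vertices, i.e.\ those with $|E|=2\cdot4-2=6$. But a simple graph on $\{u,v,w_1,w_2\}$ avoiding $uv$ has at most $\binom{4}{2}-1=5$ edges, so no such graph exists, and your ``square $6\times6$ matrix'' cannot arise. The unique inclusion-maximal $uv$-sparse graph on four vertices is $K_4-uv$. Once you recognise this, your explicit-matrix check would go through, but the paper dispatches it in one line: $K_4-uv$ is obtained from the independent graph $K_3$ on $\{v,w_1,w_2\}$ by a $0$-extension that adds $u$, so \Cref{thm:rigidne} and \Cref{l:zeroext} give $uv$-independence directly. (Both your computation and the paper's appeal to \Cref{l:zeroext} implicitly use strict convexity, although the lemma is stated for an arbitrary normed plane.)
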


\begin{proof}
    The only graphs on 4 or fewer vertices that are not $uv$-sparse are those which contain the edge $uv$, and if $G$ contains the edge $uv$ then it is not $uv$-independent.
    Suppose $uv \notin E$.
    We note that $G$ must be a subgraph of $K_4-uv$,
    so it is sufficient to consider the case $G=K_4-uv$.
    As $G$ can be formed from $G-u$ by a 0-extension that adds $u$,
    $G$ is $uv$-independent by \Cref{thm:rigidne} and \Cref{l:zeroext}.
\end{proof}

\begin{lemma}\label{l:base}
    Let $G=(V,E)$ be the graph consisting of two copies of $K_4$ intersecting in a single vertex $x\notin \{ u,v\}$,
    where $u$ and $v$ are in different copies of $K_4$.
    Then $G$ is minimally $uv$-rigid in any normed plane $X$.
\end{lemma}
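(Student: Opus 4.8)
The plan is to produce one well-positioned $uv$-coincident placement $p$ of $G$ for which $(G,p)$ is infinitesimally rigid, and then read off minimal $uv$-rigidity from the edge count. Write $A$ and $B$ for the two copies of $K_4$, with $V(A)=\{x,u,a_1,a_2\}$, $V(B)=\{x,v,b_1,b_2\}$ and $V(A)\cap V(B)=\{x\}$. Every edge of $G=A\cup B$ lies inside $V(A)$ or inside $V(B)$, and $u\notin V(B)$, $v\notin V(A)$, so $uv\notin E(G)$, $G-uv=G$, and $|E(G)|=12=2|V(G)|-2$. Hence, if I can choose $p$ with $p_u=p_v$, with $(G,p)$ infinitesimally rigid in $X$, and with the affine span of $\{p_y:y\in V(G)\}$ equal to $X$, then $\rank R(G,p)=2|V(G)|-2=|E(G)|$: here I use that, since $X$ is not Euclidean, the trivial infinitesimal flexes of a framework whose points affinely span $X$ are precisely the translations, so $k=2$ in the rank identity for infinitesimal rigidity (cf.\ the discussion around \Cref{thm:rigidne}). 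Thus such a $p$ is also independent in $X$, and therefore $G$ is minimally $uv$-rigid.

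To build such a $p$ I glue an infinitesimally rigid placement of $A$ to one of $B$, matching $(x,u)$ against $(x,v)$. By \Cref{thm:rigidne}, $K_4$ is minimally rigid in $X$, so it has an infinitesimally rigid placement; since $K_4$ is edge-transitive, the set $S:=\{q_y-q_z: q\text{ an infinitesimally rigid placement of }K_4,\ zy\in E(K_4)\}$ is non-empty and does not depend on the chosen edge. Fix $c\in S$. By edge-transitivity applied to the edge $xu$ of $A$ there is an infinitesimally rigid placement $q^A$ of $A$ with $q^A_u-q^A_x=c$, and similarly an infinitesimally rigid placement $q^B$ of $B$ with $q^B_v-q^B_x=c$; after translating we may assume $q^A_x=q^B_x=0$, so $q^A_u=q^B_v=c$. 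Now set $p_x=0$, $p_u=p_v=c$, $p|_{\{a_1,a_2\}}=q^A|_{\{a_1,a_2\}}$ and $p|_{\{b_1,b_2\}}=q^B|_{\{b_1,b_2\}}$. Then $p_u=p_v$, the restrictions of $p$ to $V(A)$ and $V(B)$ are exactly $q^A$ and $q^B$, and since each edge of $G$ lies in $A$ or in $B$ the framework $(G-uv,p)=(G,p)$ is well-positioned. Finally, an infinitesimally rigid placement of $K_4$ cannot have collinear image (otherwise every $\varphi_{p_y-p_z}$ would be a scalar multiple of a single functional, forcing $\rank R(K_4,p)\le 3$), so the four points of $q^A$ already affinely span $X$, and hence so does $\{p_y:y\in V(G)\}$.

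It remains to check that $(G,p)$ is infinitesimally rigid in $X$. Let $w\colon V(G)\to X$ be an infinitesimal flex of $(G,p)$. Restricting the flex equations to the edges of $A$ shows that $w|_{V(A)}$ is an infinitesimal flex of $(A,q^A)$, which is trivial because $(A,q^A)$ is infinitesimally rigid; as $X$ is not Euclidean, this trivial flex is a translation, so $w$ is constant on $V(A)$. Likewise $w$ is constant on $V(B)$. These two constants agree at the common vertex $x$, so $w$ is constant on $V(G)$, i.e.\ trivial. Hence every infinitesimal flex of $(G,p)$ is trivial, $(G,p)$ is infinitesimally rigid, and by the first paragraph $G$ is minimally $uv$-rigid.

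The delicate point — the one I would single out as the crux — is the gluing in the second paragraph: we need $A$ and $B$ realised simultaneously as infinitesimally rigid frameworks whose shared vertex $x$ sits at the origin and whose vertices $u$ and $v$ sit at one common point. Edge-transitivity of $K_4$ is exactly what makes this possible: it identifies the achievable displacements $q_u-q_x$ for $A$ with those $q_v-q_x$ for $B$, so a common value $c$ exists and we never have to construct by hand an infinitesimally rigid placement of $K_4$ with a prescribed edge. The remaining ingredients are routine — the edge-counting reduction of the first paragraph, the non-collinearity remark, and the ``rigid subframeworks meeting at a vertex force a globally constant flex'' argument of the third paragraph — and, notably, no strict-convexity hypothesis is needed, in line with the statement being for an arbitrary normed plane.
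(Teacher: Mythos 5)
Your proof is correct and follows essentially the same route as the paper: glue two infinitesimally rigid placements of $K_4$ at the shared vertex $x$ with $u$ and $v$ coincident, and use that trivial flexes in a non-Euclidean normed plane are translations only, so the two rigid pieces force a globally constant flex. The only difference is cosmetic: the paper sidesteps your edge-transitivity step by simply superimposing the \emph{same} minimally rigid $K_4$ placement on both copies (identifying $v$ with $u$, $a_v$ with $a_u$, etc.), which makes the gluing immediate.
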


\begin{proof}
    Let $V_u = \{x,u,a_u,b_u\}$ and $V_v = \{x,v,a_v,b_v\}$ be the two distinct cliques of size 4 in $G$.
    By \Cref{thm:rigidne},
    there exists a placement $p^u : V_u \rightarrow X$ so that the framework $(K_{V_u},p^u)$, where $K_{V_u}$ is the complete graph with vertex set $V_u$, is minimally rigid in $X$.
    Define the placement $p : V \rightarrow X$ by setting $p_{a_v} = p^u_{a_u}$, $p_{b_v} = p^u_{b_u}$, $p_v = p^u_u$, and $p_y = p^u_y$ for all $y \in V_u$.
    We now note that $(G,p)$ is a minimally rigid $uv$-coincident framework;
    this follows from the fact that joining two minimally rigid frameworks in a normed plane produces a minimally rigid framework, since the trivial infinitesimal flexes correspond only to translations.
    Hence $G$ is minimally $uv$-rigid as required.
\end{proof}

\begin{thm}
\label{thm:uvmatroid}
    A graph is $uv$-independent in a strictly convex normed plane $X$ if and only if it is $uv$-sparse.
\end{thm}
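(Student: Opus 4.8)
The plan is to prove both directions, with the necessity direction being routine and the sufficiency direction using the recursive construction of \Cref{p:uv-construction} together with the geometric lemmas of \Cref{sec:recursive}. For necessity, suppose $G$ is $uv$-independent in $X$; then by definition $uv\notin E$ and $(G,p)$ is independent in $X$ for some $uv$-coincident placement $p$, so $(G-uv,p)=(G,p)$ is well-positioned and $\rank R(G,p)=|E|$. I would apply \Cref{c:rankformula} (the rank formula, which holds for well-positioned frameworks) to bound $|E|$ from above by the cover-based quantity; then, given a $uv$-compatible family $\mathcal{X}=\{X_1,\dots,X_k\}$ (all $X_i\ni u,v$, $|X_i|\geq 3$), the family itself is a cover of $i_G(\mathcal{X})$ edges, and a short computation comparing $\sum(2|X_i|-2-s(|X_i|))$ with $\val(\mathcal{X})=2+\sum(2|X_i|-t_{X_i}-2)$ shows the $uv$-sparsity bound follows — the point being that for $|X_i|\in\{2,3\}$ we have $t_{X_i}\geq 3>2\geq 2+s(|X_i|)$, which gives the extra slack, together with handling the single-set case $U=\{u,v\}$ (where $i_G(U)=0$ since $uv\notin E$) and $|U|=3$ separately using the ordinary $(2,2)$-sparsity from \Cref{thm:rigidne}. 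The genuine content is the sufficiency direction.

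For sufficiency, suppose $G=(V,E)$ is $uv$-sparse; I want a $uv$-independent placement. Since the $uv$-sparse edge sets form a matroid (\cite{JKN}), it suffices to treat $uv$-tight graphs, i.e.\ I may assume $|E|=2|V|-2$. I would induct on $|V|$. For the base cases $|V|\leq 4$, \Cref{l:small} gives the result directly. For $|V|=5$ there is a case that is not covered by the general recursion — I should check whether the construction of \Cref{p:uv-construction} already reaches down to $|V|=4$ base graphs or whether the two-copies-of-$K_4$ graph on $7$ vertices is genuinely needed as a second base case; in the latter case \Cref{l:base} handles it. For the inductive step with $|V|\geq 5$, apply \Cref{p:uv-construction}: $G$ is obtained from a smaller graph $G_0$ by one of the listed operations. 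The smaller graph $G_0$ is either a $(2,2)$-tight graph with at least four vertices containing exactly one of $u,v$, which is minimally rigid by \Cref{thm:rigidne} and hence independent, or a $uv$-tight graph on fewer vertices, which is $uv$-independent by the inductive hypothesis; in both cases we have the hypothesis needed to feed into the geometric lemmas.

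It then remains to match each construction move to a preservation lemma: a $uv$-0-extension or $uv$-1-extension is handled by the (unnumbered) lemma preceding \Cref{l:zeroext}; a $0$-extension that adds $u$ or $v$ is handled by \Cref{l:zeroext}; a vertex-to-4-cycle move that adds $u$ or $v$, and a $uv$-vertex-to-4-cycle move, are handled by \Cref{l:fourcycleind}(i) and (ii) respectively; and a vertex-to-$H$ move that adds $u$ or $v$, and a $uv$-vertex-to-$H$ move, are handled by \Cref{lem:extension}(\ref{lem:extension1}) and (\ref{lem:extension2}) respectively. For the vertex-to-$H$ cases I must verify the hypothesis on $H$: in \Cref{p:uv-construction} the graph $H$ attached in a "vertex-to-$H$ move that adds $u$ or $v$" is $uv$-tight, hence (being smaller, or by \Cref{l:small}/\Cref{l:base} or induction) minimally $uv$-rigid, which is exactly what \Cref{lem:extension}(\ref{lem:extension1}) requires; and the $H$ attached in a $uv$-vertex-to-$H$ move is $(2,2)$-tight, hence minimally rigid by \Cref{thm:rigidne}, as \Cref{lem:extension}(\ref{lem:extension2}) requires. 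Since each operation preserves $uv$-independence and the base graphs are $uv$-independent, $G$ is $uv$-independent, completing the induction. The main obstacle is bookkeeping rather than a single hard step: one must be careful that the "exactly one of $u,v$" base graphs really are $(2,2)$-tight (so \Cref{thm:rigidne} applies) and that each of the six operation types in \Cref{p:uv-construction} corresponds precisely to one of the five lemmas above with its hypotheses met — strict convexity being needed (and available) for \Cref{l:zeroext} and \Cref{l:fourcycleind} but not for \Cref{lem:extension}.
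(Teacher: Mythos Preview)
Your sufficiency direction is essentially the paper's argument: induct on $|V|$, reduce to the $uv$-tight case, invoke \Cref{p:uv-construction}, and feed each operation into the matching lemma from \Cref{sec:recursive} with \Cref{thm:rigidne} and \Cref{l:base} handling the two base graphs. That part is fine.

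The necessity direction, however, has a genuine gap. You propose to apply \Cref{c:rankformula} directly to the $uv$-coincident framework $(H,q)$ (where $H$ is the subgraph covered by a $uv$-compatible family $\mathcal{X}$) and then compare $\sum_i (2|X_i|-2-s(|X_i|))$ with $\val(\mathcal{X})$. But for a $uv$-compatible family every $|X_i|\geq 3$, so $s(|X_i|)=0$ and the cover bound is $\sum_i(2|X_i|-2)$, while $\val(\mathcal{X}) = 2 + \sum_i(2|X_i|-t_{X_i}-2)$; the difference is $\sum_i t_{X_i} - 2 \geq 2k-2$, which is strictly positive once $k\geq 2$. So the cover bound is \emph{weaker} than $\val(\mathcal{X})$, not stronger, and no ``short computation'' will close that gap. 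More conceptually: nothing in your argument uses the constraint $p_u=p_v$. \Cref{c:rankformula} gives the same bound for every well-positioned placement, so if your reasoning were valid it would prove that any $(2,2)$-sparse graph with $uv\notin E$ is $uv$-sparse --- and $K_{2,3}$ (with $u,v$ the side of size two) is a counterexample to that.

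The paper's fix is to pass to the contraction. One applies \Cref{c:rankformula} not to $(H,q)$ but to $(H/uv,q^{uv})$ with the cover $\{X_i/uv\}$; each $X_i/uv$ has one fewer vertex, which is exactly what converts $2|X_i|-2-s(|X_i|)$ into $2|X_i|-t_{X_i}-2$ and yields $\rank R(H/uv,q^{uv}) \leq \val(\mathcal{X})-2$. One then observes that every infinitesimal flex of $(H/uv,q^{uv})$ lifts to a flex of $(H,q)$ with equal $u$- and $v$-components, giving $\rank R(H,q) \leq \rank R(H/uv,q^{uv}) + 2 \leq \val(\mathcal{X})$. This is where the coincidence $p_u=p_v$ actually enters.
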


\begin{proof}
    First suppose $G$ is $uv$-independent in $X$.
    Let $G/uv$ denote the graph obtained from \(G\) by contracting the vertex pair $u,v$ into a new vertex which we denote as $z$\footnote{For us, a contraction will always be the more general vertex-contraction (which does not require $u$ and $v$ be adjacent) not the stricter edge-contraction (which does require $u$ and $v$ be adjacent).}.
    Let $(G,p)$ be a regular (and hence independent) $uv$-coincident framework in $X$.
    We obtain a framework $(G/uv,p^{uv})$ in $X$ by putting $p^{uv}_{z}=p_u = p_v$ and $p^{uv}_x = p_x$ for all $x\in V\setminus \{u,v\}$.
    For any $U \subseteq V$,
    the (possibly $uv$-coincident) induced subframework $(G[U],p|_U)$ is independent.
    Hence,
    if $\{u,v\}\not\subseteq U$,
    then $i_G(U)\leq \val(U)$ by \Cref{thm:rigidne}.
    Since the case when $U = \{u, v\}$ is trivial, it now remains to show that $i_G(\mathcal{X})\leq \val (\mathcal{X})$ for all $uv$-compatible families $\mathcal{X}$ in $G$. 
    (Note that the case when $U\subseteq V$ and $\{u,v\}\subseteq U$ will be included by taking $\mathcal{X}=\{U\}$.)

    Let \(\mathcal{X}=\{X_{1},\dots,X_{k}\}\) be a \(uv\)-compatible
    family and consider the subgraph
    \(H=(U,F)\) of $G$,
    where $U=\bigcup_{i=1}^k X_i$ and $F=\bigcup_{i=1}^k E(G[X_i])$.
    By contracting the vertex pair $u,v$ in $H$, we obtain the graph $H/uv$.
    Define $q$ to be the restriction of $p$ to the vertex set $U$ and $q^{uv}$ to be the restriction of $p^{uv}$ to the vertex set $U - \{u,v\} + z$.
    We have
    \(\mathcal{X}_{uv}=\{X_{1}/uv,\dots,X_{k}/uv\}\) is a cover of $E(H/uv)$ where
    \(X_{i}/uv\) denotes the set that we get from \(X_{i}\) by
    identifying \(u\) and \(v\).
    By \Cref{c:rankformula},
     we have
    \begin{eqnarray*}
        \rank R(H/uv,q^{uv}) &\leq& \sum_{i=1}^k(2|X_i/uv|-2-s(|X_i/uv|)\\
        &=& \sum_{i=1}^k(2|X_i|-2-t_{X_i}) \\
        &=& \val(\mathcal{X}) - 2.
    \end{eqnarray*}
    Every vector $\mu^{uv}$ in the kernel of $R(H/uv,q^{uv})$ determines a unique vector $\mu$ in the kernel of $R(H,q)$ with $\mu_u=\mu_v =\mu^{uv}_{z}$ and $\mu_x = \mu_x^{uv}$ for all for all $x\in U\setminus \{u,v\}$.
    Hence $\dim \ker R(H,q) \geq \dim\ker R(H/uv,q^{uv})$.
    The rigidity matrix $R(H,q)$ has linearly independent rows since $R(G,p)$ has linearly independent rows,
    hence we have
    \begin{align*}
        i_G(\mathcal{X}) = \rank R(H,q) \leq \rank R(H/uv,q^{uv})+ 2 \leq \val(\mathcal{X}).
    \end{align*}
    Thus \(G\) is $uv$-sparse.
    
    We prove the sufficiency by induction on $|V|$. Suppose that $G$ is $uv$-sparse.
    If $|V|\leq 4$, then $G$ is $uv$-independent in $X$ by \Cref{l:small}. 
    So we may suppose that $|V|\geq 5$.
    By adding additional edges, if necessary,  we may assume $G$ is $uv$-tight\footnote{Recall that $uv$-sparse graphs are the independent sets of a matroid, and when $|V| \geq 5$, the bases of this matroid have rank $2|V|-2$.}. 
    By \Cref{p:uv-construction}, $G$ can be constructed from either a $(2,2)$-tight graph containing exactly one of $u$ and $v$, or the graph pictured in \Cref{fig:2k4s}, by the operations defined in \Cref{sec:recursive}.
    Furthermore, as $X$ is strictly convex, the corresponding geometric operations preserve minimal rigidity in $X$ (see \Cref{sec:recursive}).
    The result now follows from \Cref{thm:rigidne} (i.e., every $(2,2)$-tight graph is independent in $X$) and \Cref{l:base}.
\end{proof}

We next use this result to prove the following delete-contract characterisation of $uv$-rigidity.

\begin{thm}\label{lem:dc}
Let $G$ be a graph with distinct vertices $u,v$, and let $X$ be a strictly convex normed plane. 
Then $G$ is $uv$-rigid in $X$ if and only if $G-uv$ and $G/uv$ are both rigid in $X$.
\end{thm}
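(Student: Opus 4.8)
The plan is to prove the two implications separately, with Theorem~\ref{thm:uvmatroid} (and the matroidal structure of $uv$-sparsity) doing the combinatorial heavy lifting on the harder direction.

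For the forward implication, suppose $G$ is $uv$-rigid in $X$. Choose a regular $uv$-coincident framework $(G,p)$; after deleting a maximal independent subset of edges we obtain a spanning $uv$-sparse subgraph $H$ of $G$ with $H$ rigid in the $uv$-coincident sense. If $|V|\geq 5$ we may take $H$ to be $uv$-tight, i.e.\ $|E(H)|=2|V|-2$. Now $(H-uv,p)$ is independent and infinitesimally rigid in $X$; since $p_u=p_v$, the placement $p$ descends to a placement $p^{uv}$ of $G/uv$, and from the column-identification computation already used in the proof of Theorem~\ref{thm:uvmatroid} (a kernel vector of $R(H/uv,p^{uv})$ lifts to a kernel vector of $R(H-uv,p)$) one reads off that $\rank R(H/uv,p^{uv}) \geq \rank R(H-uv,p) - 2 = (2|V|-2) - 2 = 2(|V|-1) - 2$. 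Since $G/uv$ has $|V|-1$ vertices and $\dim X=2$, this says $R(G/uv,p^{uv})$ has full rigidity rank, so $G/uv$ is rigid in $X$. For $G-uv$: infinitesimal rigidity of $(G-uv,p)$ in the sense used for $uv$-coincident frameworks already means $R(G-uv,p)$ has rank $2|V|-k$ where $k=\dim X=2$ is the trivial-flex dimension at a coincident configuration — wait, one must be slightly careful here because the placement is non-generic — so instead perturb: $(G-uv,p)$ independent and rigid forces $|E(G-uv)| \geq 2|V|-2$, and $G-uv$ has an independent placement with that many edges spanning a rigid framework, hence $G-uv$ is rigid. (Small-case $|V|\leq 4$ is handled directly.)

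For the converse, suppose $G-uv$ and $G/uv$ are both rigid in $X$. It suffices to show $G$ contains a $uv$-tight spanning subgraph, since by Theorem~\ref{thm:uvmatroid} such a subgraph is $uv$-independent in $X$, hence $|E|=2|V|-2$ rows are independent and span a space of the correct dimension, giving $uv$-rigidity. To find the $uv$-tight subgraph we argue purely combinatorially: by Theorem~\ref{thm:rigidne}, $G-uv$ contains a spanning $(2,2)$-tight subgraph $G_1$, and $G/uv$ contains a spanning $(2,2)$-tight subgraph of $G/uv$, which lifts to a subgraph $G_2$ of $G$ on all of $V$ with $|E(G_2)| = 2(|V|-1)-2$ and whose image in $G/uv$ is $(2,2)$-tight. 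The key step is to combine these via a matroid-union / Nash-Williams-type argument on the $uv$-sparsity matroid: one shows that from the union $G_1\cup G_2$ (or a suitable choice thereof) one can extract a $uv$-sparse subgraph with $2|V|-2$ edges, i.e.\ the rank of the $uv$-sparsity matroid on $G$ equals $2|V|-2$. Equivalently, if $G$ had \emph{no} spanning $uv$-tight subgraph then the rank is $<2|V|-2$, which by the rank formula for the $uv$-sparsity matroid in \cite{JKN} would produce either a set $U$ with $i_G(U)>\val(U)$ or a $uv$-compatible family $\mathcal{X}$ with $i_G(\mathcal{X})>\val(\mathcal{X})$; one then checks that the first alternative contradicts $(2,2)$-tightness of $G_1$ inside $G-uv$, while the second, after contracting $u,v$, contradicts that $G/uv$ contains a spanning $(2,2)$-tight subgraph (a $uv$-compatible family violating $\val$ maps to an overcovered family in $G/uv$).

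The main obstacle is this last combinatorial step — deducing that the $uv$-sparsity matroid has full rank $2|V|-2$ from $(2,2)$-rigidity of $G-uv$ together with $(2,2)$-rigidity of $G/uv$ — because the $uv$-sparsity count is genuinely more restrictive than two superimposed $(2,2)$-counts (as the $K_{2,3}$ example in \Cref{sec:introdefs} shows), so one cannot simply add spanning trees. I expect the cleanest route is to invoke the matroid of \cite{JKN} directly: its rank function is known explicitly, so one translates "$G-uv$ and $G/uv$ both rigid" into lower bounds $i_G(U)$ cannot exceed and verifies these force the rank to be exactly $2|V|-2$. Everything else (the rigidity-matrix rank bookkeeping, the kernel lift for the contraction, the small-$|V|$ cases) is routine given Theorems~\ref{thm:rigidne}, \ref{thm:uvmatroid} and the rank machinery already assembled in \Cref{sec:char}.
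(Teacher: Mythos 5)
Your proposal is correct and follows essentially the same route as the paper: the necessity direction is the same contraction/lifting computation (the paper lifts an infinitesimal flex of $(G/uv,p^{uv})$ to one of the $uv$-coincident framework directly, rather than via your rank inequality, and gets rigidity of $G-uv$ immediately from the definition), and for sufficiency the paper, exactly as you ultimately suggest, combines Theorem~\ref{thm:uvmatroid} with the combinatorial argument from the proof of Theorem~1 of \cite{JKN} showing that rigidity of $G-uv$ and $G/uv$ forces a spanning $uv$-tight subgraph. The step you flag as the main obstacle is precisely the part the paper also delegates to \cite{JKN}, so no essential detail is missing beyond what the paper itself omits.
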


\begin{proof}
Suppose that $G$ is $uv$-rigid.
It is immediate from the definition that $G-uv$ must be rigid.
Choose a regular $uv$-coincident placement $p$ of $G$,
and define $p^{uv}$ to be the placement of $G/uv$ where $p^{uv}_x = p_x$ for all $x \in V - \{u,v\}$ and (given that $z$ is the vertex obtained from $u$ and $v$ during the contraction) $p^{uv}_z = p_u = p_v$.
Given an infinitesimal flex $\mu^{uv}$ of $(G/uv,p^{uv})$,
we can form an infinitesimal flex $\mu$ of $(G,p)$ by setting $\mu_x = \mu^{uv}_x$ for all $x \in V - \{u,v\}$ and $\mu_u = \mu_v = \mu^{uv}_z$.
Since $(G,p)$ is infinitesimally rigid as a $uv$-coincident framework,
we must have that $\mu = (\lambda)_{x\in V}$ (and hence $\mu^{uv} = (\lambda)_{x\in V(G/uv)}$) for some vector $\lambda \in X$.
Thus $(G/uv,p^{uv})$ is infinitesimally rigid and $G/uv$ is rigid.

The converse follows from \Cref{thm:uvmatroid} as in the proof of \cite[Theorem 1]{JKN}.
\end{proof}

We conjecture that the last two results
apply in arbitrary normed planes.

\begin{conjecture}
Let \(G=(V,E)\) be a graph and let $u,v\in V$ be distinct vertices. Then \(G\) is $uv$-independent in a normed plane $X$ if and only if \(G\) is $uv$-sparse.
\end{conjecture}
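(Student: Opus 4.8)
The plan is to retrace the proof of \Cref{thm:uvmatroid} and localise every use of strict convexity. The necessity direction ($uv$-independent $\Rightarrow$ $uv$-sparse) already goes through verbatim in an arbitrary normed plane, since it invokes only \Cref{thm:rigidne} and the upper bound of \Cref{c:rankformula}, neither of which needs a convexity hypothesis. So everything hinges on the sufficiency direction, and there the only appeals to strict convexity are in \Cref{l:zeroext}, \Cref{l:fourcycleind}, and --- through the former --- \Cref{l:small}; the vertex-to-$H$ steps (\Cref{lem:extension}), the $uv$-$0$- and $uv$-$1$-extension steps, and the base graph of \Cref{fig:2k4s} (\Cref{l:base}) are all already established for arbitrary normed planes, and $(2,2)$-tight graphs are independent in any normed plane by \Cref{thm:rigidne}. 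Hence it suffices to prove norm-general analogues of \Cref{l:zeroext} and \Cref{l:fourcycleind}.

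Both of those lemmas reduce to a single geometric statement, which I would isolate and try to prove first: \emph{if $p$ is an independent (respectively, $uv$-independent) placement of $G$ in a normed plane $X$ and $v_1,v_2$ are two vertices of $G$, then $G$ has an independent (respectively, $uv$-independent) placement $p^\ast$ for which $p^\ast_{v_1}$ and $p^\ast_{v_2}$ are smooth with $\varphi_{p^\ast_{v_1}}$ and $\varphi_{p^\ast_{v_2}}$ linearly independent} (and, for the vertex-to-$4$-cycle case, $p^\ast_w,p^\ast_{v_1},p^\ast_{v_2}$ not collinear, which is an easy extra genericity condition). Granting such a $p^\ast$, the matrix computations in the proofs of \Cref{l:zeroext} and \Cref{l:fourcycleind} carry over unchanged: those proofs use strict convexity \emph{only} to pass from linear independence of $p_{v_1},p_{v_2}$ to linear independence of $\varphi_{p_{v_1}},\varphi_{p_{v_2}}$, and once the latter is assumed outright the row-reduction arguments are norm-free. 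Then \Cref{l:small} follows (as $K_4-uv$ is a $0$-extension adding $u$ of a triangle), and \Cref{thm:uvmatroid} and the conjecture follow by the same induction via \Cref{p:uv-construction}.

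The main obstacle is precisely this geometric statement. In a strictly convex plane the support functionals of any two linearly independent smooth points are automatically linearly independent, so --- the independent placements forming a non-empty open set by \Cref{lem:perturb} and \Cref{lem:uv-perturb}, and the placements with $p_{v_1},p_{v_2}$ linearly dependent forming a null set --- a generic choice works. Without strict convexity the unit ball may contain a flat face $F$ (a boundary segment not through the origin); every point in the relative interior of the cone over $F$ is smooth with the \emph{same} support functional, so the bad set $\{p : \varphi_{p_{v_1}} \parallel \varphi_{p_{v_2}}\}$ can have non-empty interior, and a priori it could contain the whole open set of independent placements of $G$. I would try to rule this out combinatorially: if every independent placement of $G$ forced $p_{v_1}$ and $p_{v_2}$ into a common cone over a flat face, then restricting to the subframework spanned by a rigid subgraph joining $v_1$ to $v_2$ and applying \Cref{c:rankformula} ought to contradict $uv$-sparsity of the extended graph $G'$. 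A complementary line is to strengthen \Cref{p:uv-construction} so that $0$-extensions adding $v$ and vertex-to-$4$-cycle moves adding $v$ are never needed --- i.e.\ so every $uv$-tight graph on at least five vertices is built using only moves already known to preserve $uv$-independence in arbitrary normed planes; I expect this purely combinatorial strengthening to fail for some graphs, but even a partial version would shrink the family of graphs for which the geometric problem must be solved.

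Finally, it is worth pursuing in parallel a cleaner reformulation. By \Cref{thm:rigidne} the rigidity matroid of a normed plane is the $(2,2)$-count matroid \emph{independently of the norm}; one may ask whether the coincident-point rigidity matroid is likewise norm-independent, in which case the conjecture follows from the strictly convex case \Cref{thm:uvmatroid}. The natural attack is a homotopy through the space of norms, tracking the generic rank of $R(G-uv,\cdot)$ over $uv$-coincident placements; the difficulty, once again, is that the homotopy must pass through non-strictly-convex norms, where a priori this generic rank could jump.
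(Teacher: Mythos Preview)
The statement you are attempting is a \emph{conjecture} in the paper, not a theorem: the paper does not supply a proof, and immediately after stating it the authors write only that ``extending our proof to this generality requires only improvements to \Cref{l:zeroext,l:fourcycleind}.'' Your localisation of the obstructions is therefore exactly right and coincides with the paper's own diagnosis: the necessity direction, the $uv$-0- and $uv$-1-extensions, \Cref{lem:extension}, \Cref{l:base}, and \Cref{thm:rigidne} all hold in an arbitrary normed plane, and the only gaps are \Cref{l:zeroext}, \Cref{l:fourcycleind}, and (via the former) \Cref{l:small}.

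What you have written, however, is a research plan rather than a proof. The ``single geometric statement'' you isolate --- that an independent placement can always be perturbed so that $\varphi_{p^\ast_{v_1}}$ and $\varphi_{p^\ast_{v_2}}$ are linearly independent --- is precisely the open content of the conjecture, not a lemma you have established. You correctly identify the obstruction (a flat face of the unit ball makes the bad set $\{p:\varphi_{p_{v_1}}\parallel\varphi_{p_{v_2}}\}$ open, so genericity alone does not help), but your two proposed attacks are both left as heuristics: the ``combinatorial contradiction via \Cref{c:rankformula}'' sketch does not explain which rigid subgraph to use or why the rank bound would force a $uv$-sparsity violation, and you yourself expect the strengthened construction sequence to fail in general. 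The homotopy-through-norms idea faces the same difficulty you flag --- semicontinuity of generic rank is exactly what is in question --- and no mechanism is offered to control it. In short, everything reducible has been reduced correctly, but the irreducible core remains untouched; as it stands this is a faithful restatement of why the conjecture is open, not a proof of it.
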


Indeed extending our proof to this generality requires only improvements to \Cref{l:zeroext,l:fourcycleind}.

\section{Global rigidity in analytic normed planes}
\label{sec:glob}

A framework $(G,p)$ in a normed space $X$ is said to be \emph{globally rigid} if every other framework $(G,q)$ in $X$ with $\|p_v-p_w\| = \|q_v-q_w\|$ for every edge $vw \in E$ is congruent to $(G,p)$.
A graph is then said to be \emph{globally rigid} in $X$ if the set
\begin{align*}
    \left\{ p \in X^V : (G,p) \text{ is globally rigid}  \right\}
\end{align*}
has a non-empty interior.
It can be quickly seen that any globally rigid framework/graph will also be rigid.

Although much is known about global rigidity in Euclidean spaces, very little is known about the property for normed spaces.
The results that are known are only for \emph{analytic normed spaces}, i.e., normed spaces where the norm restricted to the non-zero points is a real analytic function.
As well as being strictly convex (\cite[Lemma 3.1]{DN}), analytic normed spaces have many useful properties, including the following.

\begin{lemma}\label{l:openconull}
    Let $G$ be a graph with distinct vertices $u,v$ and let $X$ be an analytic normed space.
    \begin{enumerate}[(i)]
        \item\label{l:openconull1} The set of all $p \in X^V$ where $(G,p)$ is a regular framework is an open conull subset of $X^V$.
        \item\label{l:openconull2} The set of all $p \in X^V/uv$ where $(G,p)$ is a regular $uv$-coincident framework is an open conull subset of $X^V/uv$.
    \end{enumerate}
\end{lemma}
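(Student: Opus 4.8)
The plan is to show that in the analytic setting the functions governing the rank of the rigidity matrix are real analytic, and then invoke the standard fact that a real analytic function on a connected open set is either identically zero or nonzero on an open conull set. We already know from \Cref{lem:perturb} (respectively \Cref{lem:uv-perturb}) that the regular placements form a non-empty open subset of the well-positioned placements, and that the well-positioned placements are conull; so the only genuinely new content is that the regular set is conull, i.e.\ that its complement among well-positioned placements has measure zero. For (i): fix a graph $G=(V,E)$ and work on the open set $W\subseteq X^V$ of well-positioned placements, which (since $X$ is analytic, hence smooth, and since the norm is analytic away from $0$) is actually open in $X^V$ and has full measure. On $W$ the map $p\mapsto \varphi_{p_x-p_y}$ is real analytic for each edge $xy$, because $z\mapsto\varphi_z$ is (up to the identification of $X$ with $X^*$ via a basis) the gradient of $\tfrac12\|\cdot\|^2$, which is analytic on $X\setminus\{0\}$ when the norm is analytic there. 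Hence every entry of $R(G,p)$ is a real analytic function of $p$ on $W$. Let $r=\max_{p\in W}\rank R(G,p)$ be the generic rank; for each $r\times r$ submatrix the determinant is a real analytic function on $W$, and $p$ is regular precisely when at least one of these finitely many determinants is nonzero. Since $W$ is open and connected after passing to a connected component (and $X^V$ is connected, while $W$ is the complement of a proper analytic subvariety, hence connected), and since at least one such determinant is not identically zero (by the definition of $r$ and \Cref{lem:perturb}), the zero set of that determinant has measure zero; therefore the set of non-regular placements in $W$ is contained in a finite union of proper analytic subvarieties, hence has measure zero. Combined with openness from \Cref{lem:perturb}, this gives (i).

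For (ii) the argument is identical but carried out on the linear subspace $X^V/uv$ in place of $X^V$, using the rigidity matrix $R(G-uv,p)$ (which is the object defining regularity of a $uv$-coincident framework) and \Cref{lem:uv-perturb} in place of \Cref{lem:perturb}. One restricts the analytic determinant functions to $X^V/uv$; the key point is that the restriction of a real analytic function to an affine (here linear) subspace is again real analytic, and that the restricted functions are not identically zero on $X^V/uv$ because \Cref{lem:uv-perturb} guarantees a regular $uv$-coincident placement exists. Note $X^V/uv\cong X^{V-1}$ is connected, and the well-positioned $uv$-coincident placements form an open conull subset of it by \Cref{lem:uv-perturb} together with analyticity (the point $p_u-p_v=0$ is excluded precisely because we delete the edge $uv$, and every other edge difference is smooth on a conull open set). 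The remaining measure-zero complement argument goes through verbatim, and openness is supplied by \Cref{lem:uv-perturb}.

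The main obstacle, and the point that deserves care in the write-up, is establishing that the map $z\mapsto\varphi_z$ is real analytic on $X\setminus\{0\}$ when $X$ is an analytic normed space. This is where the hypothesis is really used: in a general smooth normed space $z\mapsto\varphi_z$ is only continuous (as used in \Cref{lem:extension}'s proof via \cite[Theorem 25.5]{rockafellar}), not analytic, and the whole open-conull conclusion can fail. Once one identifies $\varphi_z$ with $\|z\|\,\nabla\|z\|$ (equivalently $\tfrac12\nabla\|z\|^2$), analyticity of the norm on $X\setminus\{0\}$ transfers directly. I would cite \cite{DN} for the fact that analytic normed spaces are smooth and strictly convex, and record the analyticity of $z\mapsto\varphi_z$ as the one computational lemma underpinning both parts; everything after that is the routine ``analytic function vanishing on a positive-measure set vanishes identically'' principle applied to maximal minors.
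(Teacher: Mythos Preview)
Your proposal is correct and follows essentially the same approach as the paper: both arguments rest on the observation that the entries of $R(G,p)$ are real analytic in $p$ (via $\varphi_z = \tfrac12\nabla\|z\|^2$), so non-regularity is cut out by the vanishing of finitely many maximal minors, whose common zero set is null on the connected open set of well-positioned placements. The paper's proof is terser because it outsources these steps to \cite[Propositions 2.3 and 3.2]{DN} rather than unpacking them, and it also disposes of the case $\dim X=1$ separately (where your connectedness-of-$W$ claim would fail, though the conclusion is trivial there); you may want to add that one-line case split.
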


\begin{proof}
    If $\dim X = 1$ then the result follows immediately from noticing that all well-positioned frameworks and $uv$-coincident frameworks are regular.
    Suppose $\dim X \geq 2$.
    It was shown in \cite[Proposition 3.2]{DN} that the set of well-positioned but non-regular placements of $G$ are exactly the zero set of a non-constant analytic function defined on the connected open conull set of well-positioned placements. This gives (i).
    For (ii) we can use the same technique to show that the set of well-positioned but non-regular $uv$-coincident placements of $G$ are exactly the zero set of a non-constant analytic function defined on the connected open conull set of well-positioned $uv$-coincident placements.
    The result now holds as the zero set of a non-constant analytic function with connected domain is always a closed null subset (see \cite[Proposition 2.3]{DN}).
\end{proof}

Importantly, we can define a large class of globally rigid graphs in any analytic normed plane.

\begin{proposition}[\cite{DN}]\label{thm:k5-eH} 
Let $X$ be an analytic normed plane.
	Then the graphs $K_5-e$ and $H$, depicted in \Cref{fig:smallgraphs1}, are globally rigid in $X$. 
	Moreover any graph obtained from either of these by a sequence of degree 3 vertex additions (i.e., add a vertex and join it to three other vertices) and edge additions is globally rigid.
\end{proposition}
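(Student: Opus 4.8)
The plan is to establish the result in three parts: (a) global rigidity of the two seed graphs $K_5-e$ and $H$; (b) that an edge addition preserves global rigidity; and (c) that a degree $3$ vertex addition (adding a new vertex joined to three existing vertices) preserves global rigidity. Since every graph in the stated family is built up from a seed by such moves, these three parts suffice. I expect (b) to be routine, (c) to be a clean geometric argument, and (a) to be where the real work lies.

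For (b): if $(G,p)$ is globally rigid and $G'=G+f$, then any framework $(G',q)$ with $\|q_x-q_y\|=\|p_x-p_y\|$ for all $xy\in E(G')$ restricts to a framework $(G,q)$ equivalent to $(G,p)$, hence congruent to it, and the same congruence shows $(G',q)\cong(G',p)$. As $V(G')=V(G)$, the set of globally rigid placements of $G'$ contains that of $G$ and so has non-empty interior.

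For (c): suppose $G$ is globally rigid in the analytic normed plane $X$, fix a non-empty open set $U\subseteq X^{V}$ of placements $p$ with $(G,p)$ globally rigid, let $G'$ be obtained by adding a vertex $z$ with edges $za,zb,zc$, and extend any $p\in U$ to a placement $p'$ of $G'$ by choosing $p'_{z}$. Let $(G',q')$ be equivalent to $(G',p')$. Its restriction to $V$ is equivalent to $(G,p)$, hence congruent to it; replacing $(G',q')$ by a congruent copy we may assume $q'|_{V}=p$. Then $q'_{z}$ lies in $C_{a}\cap C_{b}\cap C_{c}$, where $C_{w}$ denotes the circle with centre $p_{w}$ through $p'_{z}$, and so does $p'_{z}$; it remains to show that for $(p_{a},p_{b},p_{c},p'_{z})$ ranging over a suitable non-empty open set one has $C_{a}\cap C_{b}\cap C_{c}=\{p'_{z}\}$. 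The two ingredients are: (1) since $X$ is analytic, two circles with distinct centres meet in a finite set — a proper analytic subvariety of a compact analytic curve — which, when $p_{a},p_{b},p'_{z}$ are not collinear (so that $C_{a}$ and $C_{b}$ cross transversally at $p'_{z}$, using strict convexity of $X$), consists of $p'_{z}$ and finitely many further points $x_{1},\dots,x_{m}$ varying analytically in the data away from the closed measure-zero tangency locus; and (2) for distinct $x\ne y$ the bisector $\{c\in X:\|c-x\|=\|c-y\|\}$ is a closed analytic curve with empty interior, by strict convexity and analyticity. Granting (1), fix $p_{a},p_{b},p'_{z}$ generic; then $C_{a}\cap C_{b}\cap C_{c}=\{p'_{z}\}$ holds exactly when $p_{c}$ avoids the finite union $\bigcup_{i=1}^{m}\{c:\|c-p'_{z}\|=\|c-x_{i}\|\}$, which by (2) is an open dense condition on $p_{c}$ that varies continuously with $p_{a},p_{b},p'_{z}$. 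Hence the admissible $(p_{a},p_{b},p_{c},p'_{z})$ form an open dense subset of $X^{4}$, and intersecting the corresponding placements of $G'$ with $U\times X$ (an open non-empty subset of $X^{V(G')}$) yields a non-empty open set of globally rigid placements of $G'$.

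For (a), the seeds must be treated directly: $K_5-e$ reduces, by deleting one of its two degree $3$ vertices, only to $K_4$, which is merely minimally rigid and is not globally rigid in a non-Euclidean normed plane, so no smaller graph is available, and $H$ is likewise a genuine seed. Here I would fix a generic, hence (by \Cref{l:openconull}) regular, placement of the graph, take an arbitrary framework with the same edge lengths, and combine the combinatorial structure — for $K_5-e$, its description as two copies of $K_4$ glued along a common triangle — with analyticity of the norm to force the two frameworks to be congruent; analyticity is what makes the set of equivalent frameworks, modulo congruence, a real-analytic variety that is a single point near the chosen placement and that one can hope to pin down globally. This ad hoc analysis of the two seeds — rather than the inductive step (c) — is where I expect the main difficulty to be.
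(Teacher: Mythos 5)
This proposition is not proved in the paper at all: it is quoted verbatim from \cite{DN}, so there is no internal argument to compare against and your proposal has to stand on its own. It does not, because part (a) --- the global rigidity of the two seed graphs $K_5-e$ and $H$ in an arbitrary analytic normed plane --- is precisely the substance of the cited result, and your treatment of it is a statement of intent rather than a proof: ``I would fix a generic placement \dots and combine the combinatorial structure \dots with analyticity \dots that one can hope to pin down globally.'' Nothing in that sketch explains how to rule out non-congruent equivalent realisations. Local uniqueness of the realisation near a regular placement is essentially infinitesimal rigidity and gives nothing global; the decomposition of $K_5-e$ into two copies of $K_4$ glued along a triangle does not by itself force congruence, since each $K_4$ is only minimally rigid and the two halves could in principle be reassembled differently (``flexed through'' a reflection-type ambiguity) in a non-Euclidean plane. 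Establishing that this cannot happen is exactly where \cite{DN} invests its real work (analysis of the isometry group, rigid body motions and analytic-variety arguments), and your proposal leaves it entirely open; as you yourself note, this is the main difficulty, so the proof is incomplete at its core.

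The remaining parts are in better shape but still lean on unproved assertions. Part (b), edge addition, is correct and routine. Part (c) is the natural trilateration argument and its overall structure (normalise by a congruence so that the restriction to $V$ equals $p$, then show three ``circles'' meet only in $p'_z$ for an open set of data) is sound, but the analytic inputs are asserted rather than established: that two circles with distinct centres in a strictly convex plane meet in at most finitely many points (true --- at most two --- but needing a reference or proof), that these intersection points vary analytically off a null tangency locus, and that bisectors $\{c:\|c-x\|=\|c-y\|\}$ have empty interior. You would also need to check carefully that the resulting ``good'' configurations form an open set of placements of $G'$, not merely a dense one, since global rigidity of the graph is defined via non-empty interior. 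These gaps are repairable, but together with the missing part (a) the proposal does not constitute a proof of the proposition; in the context of this paper the correct move is simply to cite \cite{DN}.
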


 \begin{figure}[htp]
\begin{center}
\begin{tikzpicture}[scale=.4]
\filldraw (-.5,0) circle (4.5pt)node[anchor=east]{};
\filldraw (0,3) circle (4.5pt)node[anchor=east]{};
\filldraw (3.5,0) circle (4.5pt)node[anchor=west]{};
\filldraw (3,3) circle (4.5pt)node[anchor=west]{};
\filldraw (1.5,-1.5) circle (4.5pt)node[anchor=west]{};

 \draw[black,thick]
(1.5,-1.5) -- (-.5,0) -- (0,3) -- (3.5,0) -- (3,3) -- (-.5,0) -- (3.5,0);

\draw[black,thick]
(1.5,-1.5) -- (0,3) -- (3,3) -- (1.5,-1.5);

        \end{tikzpicture}
          \hspace{0.5cm}
     \begin{tikzpicture}[scale=.4]
\filldraw (0,0) circle (4.5pt)node[anchor=east]{};
\filldraw (0,3.5) circle (4.5pt)node[anchor=east]{};
\filldraw (3.5,0) circle (4.5pt)node[anchor=north]{};
\filldraw (3.5,3.5) circle (4.5pt)node[anchor=south]{};
\filldraw (7,0) circle (4.5pt)node[anchor=west]{};
\filldraw (7,3.5) circle (4.5pt)node[anchor=west]{};

 \draw[black,thick]
(0,0) -- (0,3.5) -- (3.5,0) -- (3.5,3.5) -- (0,0) -- (3.5,0) -- (7,3.5);

\draw[black,thick]
(0,3.5) -- (3.5,3.5) -- (7,3.5) -- (7,0) -- (3.5,3.5);

\draw[black,thick]
(7,0) -- (3.5,0);

\end{tikzpicture}
\end{center}
\vspace{-0.3cm}
\caption{The graphs $K_5^-$ (left) and $H$ (right).}
\label{fig:smallgraphs1}
\end{figure}
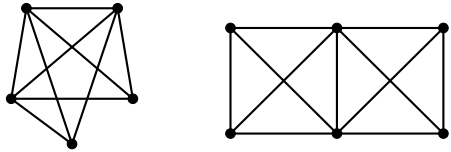

We next increase this class of graphs with the following construction operation introduced in \cite{JNglobal}. A \emph{generalised
vertex split}, is defined as follows.
Choose $z\in V$ and a
partition $N_u,N_v$ of the neighbours of $z$.
Next, delete $z$ from
$G$ and add two new vertices $u,v$ joined to $N_u,N_v$,
respectively.
Finally add two new edges $uv,uw$ for some $w\in
V\setminus N_u$.
See \Cref{fig:vsplit} for an illustration of the operation.

\begin{center}
\begin{figure}[ht]
\begin{tikzpicture}[scale=0.7]
\draw (-3,-5) circle (35pt);
\draw (4,-5) circle (35pt);

\filldraw (-3,-2.5) circle (2pt) node[anchor=south]{$z$};
\filldraw (3.5,-2.5) circle (2pt) node[anchor=south]{$u$};
\filldraw (4.5,-2.5) circle (2pt) node[anchor=south]{$v$};

\filldraw (-3,-5) circle (2pt) node[anchor=north]{$w$};
\filldraw (4,-5) circle (2pt) node[anchor=north]{$w$};

\filldraw (-4,-4.5) circle (2pt);
\filldraw (-3.5,-4.5) circle (2pt);
\filldraw (-2.5,-4.5) circle (2pt);
\filldraw (-2,-4.5) circle (2pt);
\filldraw (3,-4.5) circle (2pt);
\filldraw (3.5,-4.5) circle (2pt);
\filldraw (4.5,-4.5) circle (2pt);
\filldraw (5,-4.5) circle (2pt);

\draw[black]
(0,-5) -- (1,-5) -- (0.9,-5.1);

\draw[black]
(1,-5) -- (0.9,-4.9);

\draw[black]
(-4,-4.5) -- (-3,-2.5) -- (-3.5,-4.5);

\draw[black]
(-2.5,-4.5) -- (-3,-2.5) -- (-2,-4.5);

\draw[black]
(3,-4.5) -- (3.5,-2.5) -- (3.5,-4.5);

\draw[black]
(4.5,-4.5) -- (4.5,-2.5) -- (5,-4.5);

\draw[black]
(4,-5) -- (3.5,-2.5) -- (4.5,-2.5);

\end{tikzpicture}
\caption{Generalised vertex split.} \label{fig:vsplit}
\end{figure}
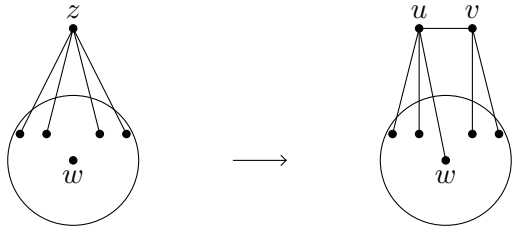
\end{center}

As the name suggests, this operation generalises the usual vertex splitting operation, see \cite{Whi}, which is the special case when $w$ is chosen to be a neighbour of $v$. Note also that the special case when $u$ has degree 3 (and $v=z$) is the well known 1-extension operation. Previously it was not known whether the 1-extension operation or a suitably restricted version of the vertex splitting operation preserves global rigidity in any non-Euclidean normed plane $X$. 

As an application of our main result we will deduce that global rigidity can, under certain conditions, be preserved for generalised vertex splits.
We will first need the following result which can be seen to follow from adapting the methods in \cite[Section 3.2]{DN} to allow frameworks with zero-length edges\footnote{Although it is a prerequisite in \cite[Section 3.2]{DN} that the frameworks are well-positioned, the proof technique only requires that the squared edge-length map is differentiable. Since the map $x \mapsto \|x\|^2$ is always differentiable at the point 0, we can refine the result so that it holds for frameworks with zero-length edges.}.

\begin{lemma}\label{l:peturbgr}
    Let $(G,p)$ be a $uv$-coincident framework in a smooth normed space $X$.
    If $(G,p)$ is globally rigid and infinitesimally rigid,
    then there exists an open neighbourhood $U \subset X^V$ of $p$ where for each $q \in U$ the framework $(G,q)$ is globally rigid.
\end{lemma}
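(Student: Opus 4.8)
The plan is to run a standard continuity/stability argument for global rigidity, using the infinitesimal rigidity hypothesis to rule out nearby non-congruent realizations. First I would set up the relevant configuration space: since $(G,p)$ is $uv$-coincident, work in $X^V$ but remember that the edge-length map $f_G\colon X^V\to\bR^{E}$, $f_G(q)=(\|q_x-q_y\|^2)_{xy\in E}$, is differentiable at $p$ even though $p_u-p_v=0$ (this is the content of the footnote, since $x\mapsto\|x\|^2$ is differentiable at $0$ with derivative $0$, which is exactly why we keep the edge $uv$ but it contributes a zero row). The derivative $\mathrm{d}f_G|_p$ is, up to a harmless rescaling of rows, the rigidity matrix $R(G-uv,p)$ augmented by a zero row for $uv$; so infinitesimal rigidity of the $uv$-coincident framework says $\rank \mathrm{d}f_G|_p = \rank R(G-uv,p) = d|V|-k$, where $k=\dim X$ is the dimension of the translation space (the only trivial infinitesimal flexes of a $uv$-coincident framework being translations, as noted in the proof of \Cref{l:base}).

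Next I would invoke the quotient construction from \cite[Section 3.2]{DN}: global rigidity at $p$ is equivalent to saying that the fibre of $f_G$ through $p$, modulo the isometry group of $X$ acting on placements, is locally a single point, and in fact (adapting that section) that $p$ is a point at which $f_G$, viewed as a map to $\bR^E$ quotiented appropriately, is locally injective up to congruence. Concretely, consider the map $g\colon X^V\to\bR^E$ together with the group $\Gamma$ of isometries of $X$ (a Lie group acting smoothly on $X^V$); global rigidity of $(G,p)$ means that for $q$ near $p$ with $g(q)=g(p)$ we have $q\in\Gamma\cdot p$. The key local statement, extracted from \cite[Section 3.2]{DN} with the zero-length-edge refinement, is: if $(G,p)$ is globally rigid and $\mathrm{d}g|_p$ has maximal possible rank (i.e.\ $(G,p)$ is infinitesimally rigid, so $\rank\mathrm{d}g|_p=d|V|-k$), then there is a neighbourhood $U$ of $p$ such that every $q\in U$ with $g(q)=g(q')$ for some $q'\in U$ and $q,q'$ in the same fibre forces $q\in\Gamma\cdot q'$ — more simply, that the "regular value / constant rank" picture persists in a neighbourhood and combines with global rigidity at the single point $p$ to give global rigidity throughout $U$. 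I would then spell this out: pick $U$ small enough that (a) $\rank R(G-uv,q)$ stays maximal for all $q\in U$ (possible since maximal rank is an open condition), (b) the local structure of fibres of $g$ is controlled by the constant-rank theorem, and (c) any $q\in U$ that had a non-congruent equivalent realization could be pulled back, via the implicit-function-theorem parametrization of the fibre near $p$, to contradict global rigidity of $(G,p)$.

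The main obstacle I expect is the bookkeeping at the coincident point: away from the standard smooth setting of \cite{DN}, the edge-length map's differential has a zero row for $uv$, so one must be careful that "infinitesimal rigidity" in the $uv$-coincident sense (rank of $R(G-uv,p)$, not of a rigidity matrix of $G$) is exactly the hypothesis that makes the constant-rank argument go through, and that the isometry group acting is still just the translations-plus-linear-isometries group with the expected $k$-dimensional orbit through a point with affine span $X$. Once the differentiability-at-$0$ point is granted (the footnote), the argument is essentially the one in \cite[Section 3.2]{DN}: the fibre of $g$ through $p$ is, near $p$, a manifold of dimension $d|V|-(d|V|-k)=k$ on which $\Gamma$ acts transitively by global rigidity, and this transitivity is an open-and-closed condition on nearby fibres, yielding the open neighbourhood $U$. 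I would therefore present the proof as: (1) record that $f_G$ is differentiable at $p$ with $\mathrm{d}f_G|_p$ of rank $d|V|-k$ by infinitesimal rigidity; (2) quote the refined \cite[Section 3.2]{DN} machinery to deduce that the equivalence-implies-congruence property is locally stable at such a point; (3) conclude that global rigidity of $(G,p)$ propagates to a neighbourhood $U$, which is the claim.

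\begin{proof}[Proof sketch]
Let $d=\dim X$ and let $k$ be the dimension of the space of trivial infinitesimal flexes of $(G,p)$; since $(G,p)$ is $uv$-coincident and $X$ is smooth, the only trivial infinitesimal flexes are translations, so $k=d$. Consider the squared-edge-length map $f_G\colon X^V\to\bR^E$, $f_G(q)=(\|q_x-q_y\|^2)_{xy\in E}$. Because $x\mapsto\|x\|^2$ is differentiable at $0$ (with vanishing derivative), $f_G$ is differentiable at $p$, and its derivative $\mathrm{d}f_G|_p$ agrees, after rescaling each row $xy$ by $1/2$ and noting that the row for $uv$ is zero, with the rigidity matrix $R(G-uv,p)$ padded by a zero row. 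Infinitesimal rigidity of the $uv$-coincident framework gives $\rank \mathrm{d}f_G|_p=\rank R(G-uv,p)=d|V|-d$.

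Following \cite[Section 3.2]{DN}, adapted as in the footnote to permit zero-length edges, the pair $(f_G,\Gamma)$ — where $\Gamma=\Isom(X)$ acts smoothly on $X^V$ with $d$-dimensional orbit through any placement whose affine span is $X$ — has the following local stability property at a point $p$ that is both globally rigid and of maximal differential rank: there is an open neighbourhood $U\subseteq X^V$ of $p$ such that for every $q\in U$, any framework $(G,q')$ equivalent to $(G,q)$ is congruent to $(G,q)$. Indeed, maximality of $\rank R(G-uv,\cdot)$ is an open condition, so we may shrink $U$ so that it holds on all of $U$; the constant-rank theorem then exhibits the fibre $f_G^{-1}(f_G(p))$ near $p$ as a $d$-dimensional submanifold, which by global rigidity of $(G,p)$ coincides locally with the $\Gamma$-orbit $\Gamma\cdot p$. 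A standard argument (openness of the set of fibres on which the $\Gamma$-action is locally transitive, combined with the implicit-function-theorem parametrization of nearby fibres) then shows that for $q\in U$ the fibre through $q$ is likewise locally a single $\Gamma$-orbit, i.e.\ $(G,q)$ is globally rigid.
\end{proof}
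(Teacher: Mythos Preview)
Your approach is exactly the paper's: the paper gives no proof of this lemma beyond the remark (and footnote) that it follows from \cite[Section 3.2]{DN} once one observes that $x\mapsto\|x\|^2$ is differentiable at $0$, so the squared-edge-length map is differentiable at a $uv$-coincident placement even with the edge $uv$ present. Your write-up captures precisely this.

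Two small points about the sketch you add on top of the citation. First, the claim $k=d$ (trivial infinitesimal flexes are exactly translations) is not needed and is not obviously true for an arbitrary smooth non-Euclidean normed space; the argument only requires that $\rank \mathrm{d}f_G|_p$ is maximal, which is the content of infinitesimal rigidity, so you can drop the identification of $k$. Second, your final sentence says the fibre through $q$ is ``locally a single $\Gamma$-orbit'' and then concludes that $(G,q)$ is globally rigid; as written this only yields local rigidity. The passage from global rigidity at $p$ to global rigidity nearby needs a compactness step (equivalent placements, normalised by isometries, lie in a compact set, so a hypothetical sequence of non-globally-rigid $q_n\to p$ with witnesses $q_n'$ would subconverge to an equivalent $p'$, forcing $p'\in\Gamma\cdot p$ and then $q_n'\in\Gamma\cdot q_n$ for large $n$ by the constant-rank picture near $p'$). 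This is part of what \cite[Section 3.2]{DN} provides, so if you are content to cite it you are fine; if you want the sketch to stand alone, that step should be made explicit.
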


\begin{thm}\label{thm:vsplitglobal}
    Let $G$ be a globally rigid graph in an analytic normed plane $X$.
    Let $G'$ be a generalised vertex split of $G$ at the vertex $z$ with new vertices $u,v$ and suppose that $G'-uv$ is rigid in $X$.
    Then $G'$ is globally rigid in $X$.
\end{thm}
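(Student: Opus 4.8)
The plan is to manufacture a single $uv$-coincident placement of $G'$ that is at once infinitesimally rigid and globally rigid, and then to conclude with \Cref{l:peturbgr}. The first step is to identify $G'/uv$. Contracting $u$ and $v$ merges $N_u$ and $N_v$ back into $N_G(z)$ and collapses the edge $uv$; since $N_u\cap N_v=\emptyset$ the only change relative to $G$ is the (possibly already present) edge $zw$, so $G'/uv=G+zw$ is a spanning supergraph of $G$. As $G$ is globally rigid it is rigid, hence so is $G'/uv$ (adding an edge preserves rigidity). Together with the hypothesis that $G'-uv$ is rigid, and since an analytic normed plane is strictly convex (\cite[Lemma 3.1]{DN}), \Cref{lem:dc} gives that $G'$ is $uv$-rigid in $X$. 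In particular every regular $uv$-coincident placement of $G'$ is infinitesimally rigid, and by \Cref{l:openconull}(ii) the set $\mathcal R$ of such placements is an open conull subset of $X^{V'}/uv$.

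Next I would transport a globally rigid placement of $G'/uv$ up to $G'$. Let $B\colon X^{V(G'/uv)}\to X^{V'}/uv$ be the ``blow-up'' map that fixes the coordinates indexed by $V(G)\setminus\{z\}$ and sends the $z$-coordinate to the common value taken at $u$ and $v$; this is a linear isomorphism (both spaces have dimension $2|V(G)|$), so $B^{-1}(\mathcal R)$ is conull. On the other hand, $G'/uv$ being globally rigid means its set of globally rigid placements has nonempty interior, and a nonempty open set cannot lie inside a Lebesgue-null complement. Hence I can choose a placement $\bar q$ of $G'/uv$ that is globally rigid and whose blow-up $p:=B(\bar q)$ lies in $\mathcal R$; by construction $(G',p)$ is an infinitesimally rigid $uv$-coincident framework.

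It then remains to verify that $(G',p)$ is globally rigid. Given any framework $(G',q')$ with $\|q'_a-q'_b\|=\|p_a-p_b\|$ for all $ab\in E(G')$, the edge $uv$ has zero length, so $q'_u=q'_v$; set $\tilde q'_z=q'_u=q'_v$ and $\tilde q'_x=q'_x$ otherwise. Using the bijection between $E(G'/uv)$ and $E(G')\setminus\{uv\}$ (an edge $zx$ with $x\in N_u$ corresponds to $ux$, with $x\in N_v$ to $vx$, the edge $zw$ to $uw$, and all other edges are common with $G$), one checks that $(G'/uv,\tilde q')$ and $(G'/uv,\bar q)$ have the same edge lengths. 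Global rigidity of $(G'/uv,\bar q)$ then yields an isometry $f$ of $X$ with $\tilde q'=f\circ\bar q$, and unwinding the definitions of $B$ and of $\tilde q'$ gives $q'=f\circ p$, i.e.\ $(G',q')$ is congruent to $(G',p)$. Since an analytic norm is smooth, \Cref{l:peturbgr} applies to $(G',p)$ and produces an open neighbourhood of $p$ in $X^{V'}$ consisting of globally rigid placements of $G'$, so $G'$ is globally rigid in $X$.

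The step I expect to be the real obstacle is the one in the second paragraph: the naive blow-up of an arbitrary globally rigid placement of $G'/uv$ need not be a regular (equivalently here, infinitesimally rigid) $uv$-coincident placement of $G'$, so \Cref{l:peturbgr} cannot be applied to it directly. The measure-theoretic device — regularity is a conull condition in the analytic setting by \Cref{l:openconull}(ii), and the blow-up map is a linear isomorphism, so the two ``good'' conditions can be met simultaneously — is what makes the argument go through; the rest is the edge-correspondence bookkeeping between $G'$ and $G'/uv$.
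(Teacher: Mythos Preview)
Your argument is correct and follows the same route as the paper: use \Cref{lem:dc} to obtain $uv$-rigidity of $G'$, intersect the conull set of regular $uv$-coincident placements (\Cref{l:openconull}) with the open set of globally rigid placements of the contraction to manufacture a single $uv$-coincident placement that is simultaneously infinitesimally and globally rigid, and then invoke \Cref{l:peturbgr}. You are in fact more careful than the paper in writing $G'/uv = G + zw$ rather than $G'/uv = G$ (the paper's identification is only literally correct when $w \in N_v$); the one step you leave implicit is that $G'/uv$ inherits global rigidity from its spanning subgraph $G$, and your ``bijection'' between $E(G'/uv)$ and $E(G')\setminus\{uv\}$ is only an injection when $w \in N_v$ (both $uw$ and $vw$ contract to $zw$), but an injection is all the edge-length comparison requires.
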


\begin{proof} 
    Since $G'/uv = G$ is globally rigid in $X$ it is also rigid in $X$ by \Cref{t:ar}.
    As $G'-uv$ is also rigid in $X$,
    \Cref{lem:dc} implies that $G'$ is $uv$-rigid in $X$.
    Hence by \Cref{l:openconull},
    we may choose an infinitesimally and globally rigid framework $(G,p)$ so that if we define $(G',p')$ to be the $uv$-coincident framework with $p'_x = p_x$ for all $x \in V$ and $p'_u = p'_v = p_z$,
    then $(G', p')$ will be infinitesimally rigid also.
    Furthermore, $(G',p')$ will also be globally rigid as $(G,p)$ is globally rigid.
    We can now use \Cref{l:peturbgr} to deduce that $(G',q)$ is globally rigid in
    $X$ for all $q$ sufficiently close to $p'$.
    Hence $G'$ is globally rigid in $X$ also.
\end{proof}

We can now improve upon \Cref{thm:k5-eH}.
Here a graph $G = (V,E)$ is \emph{redundantly rigid} in $X$ if $G-e$ is rigid in $X$ for any edge $e \in E$.

\begin{corollary}\label{cor:improvement}
Let $G$ be a graph obtained from $K_5^-$ or $H$ by a sequence of generalised vertex splits that preserve redundant rigidity, edge additions and degree at least 3 vertex additions. Then $G$ is globally rigid in any analytic normed plane.
\end{corollary}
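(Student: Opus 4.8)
The plan is to induct on the number of operations used to build $G$ from $K_5^-$ or $H$, with \Cref{thm:k5-eH} providing the base cases. For the inductive step, let $G$ be obtained from a smaller graph $G_0$ (which by induction is globally rigid in any analytic normed plane $X$) by a single allowed move. If the move is an edge addition, then global rigidity is preserved trivially: adding an edge can only increase the set of constraints, so any placement witnessing global rigidity of $G_0$ also witnesses it for $G$ (after perturbing into the open interior if necessary, using \Cref{l:openconull}). If the move is a degree-$\geq 3$ vertex addition, we first note that a degree-$k$ vertex addition with $k \geq 4$ can be realised as a degree-$3$ vertex addition followed by $k-3$ edge additions, so it suffices to handle degree exactly $3$; but a degree-$3$ vertex addition is precisely the special case of a generalised vertex split noted after \Cref{fig:vsplit} (with $u$ of degree $3$ and $v = z$), and this case is already covered by \Cref{thm:vsplitglobal} provided the relevant $G'-uv$ is rigid — which holds here since $G'-uv$ contains the rigid graph $G_0$ plus a vertex of degree $2$, hence is rigid.

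The main case is when $G$ is obtained from $G_0$ by a generalised vertex split that preserves redundant rigidity. Here I would apply \Cref{thm:vsplitglobal} directly: $G_0$ is globally rigid in $X$ by the inductive hypothesis, so the only hypothesis left to verify is that $G - uv$ is rigid in $X$. This is exactly where the "preserves redundant rigidity" assumption is used. Indeed, $G_0$ being redundantly rigid must itself be maintained along the induction so that the hypothesis of \Cref{thm:vsplitglobal} can be checked at each step; thus the induction should carry the stronger statement that $G$ is \emph{both globally rigid and redundantly rigid} in $X$. We therefore need two things at each step: (a) redundant rigidity is preserved — for edge additions and vertex additions this is elementary combinatorial rigidity bookkeeping, and for generalised vertex splits it is assumed outright; and (b) given redundant rigidity of $G$, the graph $G - uv$ is rigid. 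For (b): $G - uv$ is obtained from $G$ by deleting the edge $uv$, and redundant rigidity of $G$ says precisely that $G$ minus any one edge is rigid, so $G - uv$ is rigid. Feeding this into \Cref{thm:vsplitglobal} yields that $G$ is globally rigid.

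The one subtlety — and the step I expect to require the most care — is confirming that redundant rigidity really does propagate through the base graphs and the two non-split operations, so that the strengthened inductive hypothesis is legitimately available when we reach a generalised vertex split. One checks that $K_5^-$ and $H$ are redundantly rigid in $X$ (both are $(2,2)$-tight with a spanning rigid subgraph after any single edge deletion — equivalently, one verifies the $(2,2)$-sparsity counts via \Cref{thm:rigidne}); that adding an edge to a redundantly rigid graph keeps it redundantly rigid (deleting any edge either leaves an edge of the old redundantly rigid graph to fall back on, or leaves the old graph itself); and that a degree-$\geq 3$ vertex addition to a redundantly rigid graph is redundantly rigid, since deleting an edge not incident to the new vertex reduces to redundant rigidity of $G_0$ plus a $0$- or $1$-extension-type attachment, while deleting an edge incident to the new vertex leaves it with degree $\geq 2$, still giving a rigid graph by a $0$-extension on the rigid $G_0$. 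With redundant rigidity thus secured at every stage, \Cref{thm:vsplitglobal} applies at each generalised vertex split and the induction closes, proving that $G$ is globally rigid in any analytic normed plane.
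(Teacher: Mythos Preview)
Your overall strategy matches the paper's one-line proof (``Follows immediately from \Cref{thm:k5-eH} and \Cref{thm:vsplitglobal}''): induct on the construction sequence, with \Cref{thm:k5-eH} supplying the base cases and the edge- and vertex-addition steps, and \Cref{thm:vsplitglobal} handling each generalised vertex split. Your observation that the induction must carry redundant rigidity as well---so that the hypothesis ``$G'-uv$ is rigid'' of \Cref{thm:vsplitglobal} is available at every split---is exactly the content the paper leaves implicit.

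There is, however, one genuine slip. A degree-$3$ vertex addition is \emph{not} the special case of generalised vertex splitting noted after \Cref{fig:vsplit}: that special case is the $1$-extension, which \emph{deletes} an edge in addition to adding a degree-$3$ vertex. A quick count confirms the two operations differ: a generalised vertex split adds one vertex and exactly two edges (the edges to $N_u$ and $N_v$ replace those to $N(z)$, and then $uv$, $uw$ are new), whereas a degree-$3$ vertex addition adds one vertex and three edges. Consequently \Cref{thm:vsplitglobal} does not apply to this move, and your argument for it collapses. The repair is simply to invoke \Cref{thm:k5-eH} (more precisely, the preservation results from \cite{DN} behind its ``Moreover'' clause) directly for degree-$\geq 3$ vertex additions and edge additions; this is what the paper's proof intends. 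Two smaller corrections in your redundant-rigidity bookkeeping: $K_5^-$ and $H$ are not $(2,2)$-tight---each has $2|V|-1$ edges---so redundant rigidity is verified by checking that every single-edge deletion leaves a $(2,2)$-tight graph; and your ``$0$- or $1$-extension-type attachment'' should simply read that adding a vertex of degree at least $2$ to a rigid graph yields a rigid graph (a $0$-extension followed by edge additions).
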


\begin{proof}
Follows immediately from \Cref{thm:k5-eH} and \Cref{thm:vsplitglobal}.
\end{proof}

Since minimally rigid graphs in $X$ have $2|V|-2$ edges by \Cref{thm:rigidne}, it is natural to expect that
if $G=(V,E)$ is globally rigid then $|E|\geq 2|V|-1$. 
The graphs $K_5^-$ and $H$ both achieve equality, but the inequality is strict for every graph in the infinite family obtained from these as in \Cref{thm:k5-eH}. To illustrate the power of \Cref{cor:improvement} we note that
we now have infinitely many globally rigid graphs for which equality holds and that this still holds if we restrict generalised vertex splitting to just one of vertex splitting or 1-extension. Two examples are depicted in \Cref{fig:examples}. The graph on the left is obtained from $H$ by a vertex split and the graph on the right is obtained from $H$ by a 1-extension. Both are globally rigid in $X$ by \Cref{cor:improvement}.

\begin{center}
\begin{figure}[ht]
\begin{tikzpicture}[scale=0.7]

\filldraw (0,0) circle (2pt) node[anchor=south]{};
\filldraw (2,0) circle (2pt) node[anchor=south]{};
\filldraw (0,2) circle (2pt) node[anchor=south]{};
\filldraw (2,2) circle (2pt) node[anchor=south]{};
\filldraw (4,0) circle (2pt) node[anchor=south]{};
\filldraw (5.5,1) circle (2pt) node[anchor=south]{};
\filldraw (4.5,2.5) circle (2pt) node[anchor=south]{};

\draw[black]
(0,0) -- (0,2) -- (2,0) -- (2,2) -- (0,0) -- (2,0) -- (4,0) -- (5.5,1) -- (4.5,2.5) -- (4,0) -- (2,2) -- (5.5,1);

\draw[black]
(0,2) -- (2,2) -- (4.5,2.5);

\filldraw (13,0) circle (2pt) node[anchor=south]{};
\filldraw (9,0) circle (2pt) node[anchor=south]{};
\filldraw (11,0) circle (2pt) node[anchor=south]{};
\filldraw (13,2) circle (2pt) node[anchor=south]{};
\filldraw (9,2) circle (2pt) node[anchor=south]{};
\filldraw (11,2) circle (2pt) node[anchor=south]{};
\filldraw (10,3) circle (2pt) node[anchor=south]{};

\draw[black]
(9,0) -- (11,0) -- (13,0) -- (13,2) -- (11,2) -- (9,2) -- (9,0) -- (11,2) -- (13,0);

\draw[black]
(11,0) -- (13,2) -- (10,3) -- (11,0) -- (11,2);

\draw[black]
(9,2) -- (10,3);

\end{tikzpicture}
\caption{Examples of globally rigid graphs.} \label{fig:examples}
\end{figure}
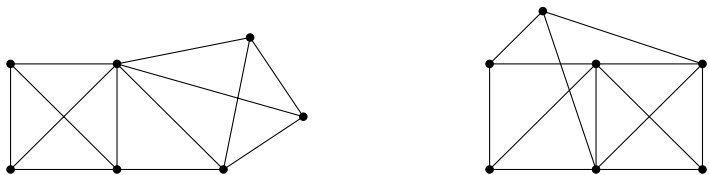
\end{center}

\bibliographystyle{abbrv}
\def\lfhook#1{\setbox0=\hbox{#1}{\ooalign{\hidewidth
  \lower1.5ex\hbox{'}\hidewidth\crcr\unhbox0}}}

\end{document}